\newcommand{\R}{\mathbb{R}}
\newcommand{\N}{\mathbb{N}}
\newcommand{\Z}{\mathbb{Z}}
\renewcommand{\hat}{\widehat}
\numberwithin{equation}{section}
\newtheorem{thm}{Theorem}[section]
\newtheorem{lem}[thm]{Lemma}
\newtheorem{prop}[thm]{Proposition}
\newtheorem{assum}{Assumption}
\theoremstyle{remark}
\newtheorem{rem}{Remark}[section]
\newcommand{\Del}[1]{}
\begin{document}

\title[Randomization improved Strichartz estimates]{Randomization improved Strichartz estimates and global well-posedness for supercritical data}
\author{Nicolas Burq and Joachim Krieger}

\subjclass{35L05, 35B40}

\keywords{wave equation, Strichartz estimates, randomised data}

\begin{abstract}
We introduce a novel data randomisation for the free wave equation which leads to the same range of Strichartz estimates as for radial data, albeit in a non-radial context. We then use these estimates to establish global well-posedness for a wave maps type nonlinear wave equation for certain supercritical data, provided the data are suitably small and randomised. 
\end{abstract}

\maketitle

\section{Improving Strichartz estimates via suitable randomization}

Consider the free wave equation
\[
\Box u = -u_{tt} + \triangle u = 0
\]
on $\R^{n+1}$, where we shall restrict to the case $n\geq 2$. Denote the initial data by $u[0] = (u(0, \cdot), u_t(0, \cdot))$. Interpolation of the point wise decay and energy conservation lead to the famous Strichartz estimates 
\begin{equation}\label{eq:Str}
\big\|u\big\|_{L_{t}^p L_x^q}\leq C(p, q, n)\big\|u[0]\big\|_{\dot{H}^{\gamma}(\R^n)\times \dot{H}^{\gamma - 1}(\R^n)},\,\frac{1}{p} + \frac{n}{q} = \frac{n}{2} - \gamma, 
\end{equation}
provided we restrict $(p, q)$ to the Strichartz admissible range, given by $\frac{1}{p} + \frac{n-1}{2q}\leq \frac{n-1}{4}$, $p\geq 2$, with the case $(n, p, q) = (3, 2, \infty)$ excluded. These estimates have been known to be optimal in general due to the well-known Knapp counterexamples. 
\\
However, it has also been known for a while that the latter can be avoided by imposing either a symmetry reduction, such as radiality (\cite{KlMa}), or imposing additional constraints on the angular regularity of the data (\cite{Ste}), in which case the range of available Strichartz estimates can be significantly improved to 
\begin{equation}\label{eq:radialStr}
\frac{1}{p} + \frac{n-1}{q}<\frac{n-1}{2},\,p\geq 2.
\end{equation}

\begin{figure}
\begin{tikzpicture}[scale=1.5]
\draw [blue,fill=blue!60](-2,2)--(-.2,0)--(1.6,0)--cycle;
\draw [blue,fill=blue!20](-2,0)--(-2,2)--(-.2,0)--cycle;
\draw[thick,  ->] (-2.2,0) -- (2.2,0);
\draw[thick,  ->] (-2, -.2) -- (-2,2.5);
\draw (2, 0) node[below] {$\frac 1 p$};
\draw (1.6, 0) node[below] {$\frac {n-1} 2$};
\draw (-.2, 0) node[below] {$\frac {n-1} 4 $};
\draw (-2, 2.4) node[left] {$\frac 1 q$};
\draw (-2, 2) node[left] {$\frac 1 2$};
\end{tikzpicture}
\caption{Admissible ranges: light = general range, dark = extended range} 
\end{figure}
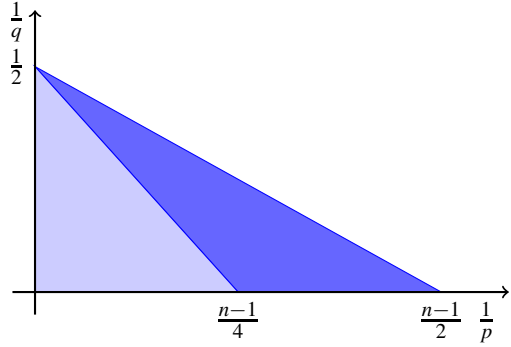
This section contains the observation that combining the method of proof from \cite{Ste} with the asymptotic analysis of 'generic' orthonormal bases for the space of spherical harmonics on $S^{n-1}$ in \cite{BurqLeb} and implementing a suitable randomisation, one can obtain almost the same estimates as in the radial case, see Proposition~\ref{prop:randomStrich1} below. In the following section, we shall show how one can use a refinement of these estimates (Proposition~\ref{prop:randomStrich2}) to deduce small data global well-posedness results below the critical scaling for certain nonlinear wave equations of 'fractional derivative Wave Maps type' on $\R^{3+1}$. For a recent work on well-posedness of derivative nonlinear wave equations involving randomised data see \cite{Czubak}. 

\subsection{Probabilistic orthonormal frames for spherical harmonics}

Consider the sphere $S^d\hookrightarrow \R^{d+1}$ and let $\triangle $ be the Laplace operator with respect to its canonical metric. Eigenfunctions $u$ satisfying $-\triangle u = \lambda^2 u$ satisfy the well-known estimates 
\[
\|u\|_{L^p(M)}\leq C\lambda^{\delta(p)}\|u\|_{L^2(M)},\,2\leq p\leq\infty,
\]
where 
\[
\delta(p) = \frac{d-1}{2} - \frac{d}{p},\,p\geq \frac{2(d+1)}{2-1)},\,\delta(p) = \frac{d-1}{2}(\frac12 - \frac{1}{p}),\,p\leq \frac{2(d+1)}{(d-1)},
\]
and these bounds are in fact optimal on the sphere. However, from \cite{BurqLeb}, we infer that eigenfunctions saturating the preceding bounds are in some sense exceptional, and that in fact orthonormal bases for $L^2(S^d)$ may be constructed which much improve these bounds. 

\begin{thm}\label{thm:almost bounded}(Burq-Lebeau \cite{BurqLeb-1, BurqLeb}) Denote by $E_k$ the space of spherical harmonics of dimension $N_k$ and associated to eigenvalue $\lambda^2 = k(k+d-1)$, $k\in \N$. Identify the set of orthonormal frames $B_k = (b_{k,l})_{l=1}^{N_k}$ of $E_k$ with the orthogonal group $O(N_k)$, equipped with the Haar measure $\nu_k$.  Let $\nu = \otimes _k \nu_k$ the natural probability measure on the set of sequence of orthonormal frames $B= (B_k)$. Then there are constants $C, c, c_0>0$ and for all $q<+\infty$, constants 
$$ \mathcal{M}_{q,k} \sim C \sqrt{q}  \text{ when $k \rightarrow + \infty$} $$ 
such that 
\begin{equation}
\begin{aligned}
\nu \big(\{B = (b_{k,l}); \exists k, l; \|b_{k,l}\|_{L^\infty(S^d)}>(c_0 + r)\sqrt{\log k}\}\big)& \leq C e^{-cr^2}\\
   \nu \big(\{B = (b_{k,l}); \exists k, l; \|b_{k,l}\|_{L^q(S^d)} - \mathcal{M}_{q,k} > r \}\big)& \leq C e^{-cr^2}
 \end{aligned}
 \end{equation}In particular, we can select an orthonormal frame $\{b_{k,l}; l = 1,2,\ldots, N_k,\,k = 1,2,\ldots,\}$  for $L^2(S^d)$ consisting of eigenfunctions of $\triangle$ with the property that
\begin{equation}\label{eq:bbound}
 \exists C; \forall k, \forall l =0, \dots, N_k, \quad \|b_{k,l}\|_{L^q(S^d)}\leq \begin{cases}C\sqrt{\log k}, & \text{ if } q= +\infty\\
 C\sqrt{q}, & \text{ if } q< +\infty. 
\end{cases}
\end{equation}
\end{thm}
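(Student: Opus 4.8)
The plan follows \cite{BurqLeb-1, BurqLeb}. Fix $k$ and a reference orthonormal basis $(e_j)_{j=1}^{N_k}$ of $E_k$, so that a $\nu_k$-random frame is $b_{k,l}=\sum_{j=1}^{N_k}U_{lj}e_j$ with $U\in O(N_k)$ Haar-distributed. The first observation is the addition formula: for \emph{any} orthonormal basis of $E_k$ one has $\sum_{l=1}^{N_k}|b_{k,l}(x)|^2\equiv N_k/|S^d|$, the diagonal of the spectral projector of $E_k$ being constant by rotation invariance. Hence for a fixed point $x$ one can write $b_{k,l}(x)=\langle w_l,v_x\rangle$ where $w_l$ is the $l$-th row of $U$ and $v_x=(e_1(x),\dots,e_{N_k}(x))$ has $|v_x|=(N_k/|S^d|)^{1/2}$; since a row of a Haar-random orthogonal matrix is uniform on $S^{N_k-1}$, the scalar $b_{k,l}(x)$ has the law of $(N_k/|S^d|)^{1/2}$ times one coordinate of a uniform point on $S^{N_k-1}$. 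I would record the three facts I need from this: $\mathbb{E}_\nu b_{k,l}(x)=0$; $\mathbb{E}_\nu|b_{k,l}(x)|^2=1/|S^d|$ exactly, independently of $k,l,x$; and the sub-Gaussian tail $\nu(|b_{k,l}(x)|>t)\le Ce^{-ct^2}$ with $C,c$ independent of $k,l,x$ — this is the concentration function of one coordinate on a high-dimensional unit sphere.

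For the $L^\infty$ bound I would combine this pointwise estimate with a Bernstein inequality. Since $b_{k,l}\in E_k$ is band-limited to frequencies $\lesssim k$, its sup norm is comparable (up to a factor $2$) to its maximum over a net $\Lambda_k\subset S^d$ of mesh $\sim 1/k$, and $\#\Lambda_k\lesssim k^d$. A union bound over the $\lesssim k^d$ net points, over $l=1,\dots,N_k$ with $N_k\sim c_d k^{d-1}$, and over $k$ then gives
\[
\nu\bigl(\exists\,k,l:\ \|b_{k,l}\|_{L^\infty(S^d)}>(c_0+r)\sqrt{\log k}\bigr)\ \lesssim\ \sum_k k^{2d-1}\,e^{-c(c_0+r)^2\log k}\ =\ \sum_k k^{2d-1-c(c_0+r)^2},
\]
and choosing $c_0$ large enough that $c c_0^2>2d$ makes this convergent and $\lesssim e^{-cr^2}$. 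The logarithm is precisely what a net of polynomial cardinality forces.

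For the $L^q$ bound with $q<\infty$ two ingredients are needed. The size of $\mathcal M_{q,k}$ comes from Fubini: $\mathbb{E}_\nu\|b_{k,l}\|_{L^q}^q=\int_{S^d}\mathbb{E}_\nu|b_{k,l}(x)|^q\,dx$, and since $b_{k,l}(x)$ is a rescaled coordinate of a uniform point on $S^{N_k-1}$, whose $q$-th moment is comparable — uniformly in $N_k$ bounded below — to that of a Gaussian $g\sim\mathcal N(0,1/|S^d|)$, one gets $(\mathbb{E}_\nu\|b_{k,l}\|_{L^q}^q)^{1/q}\sim(\mathbb{E}|g|^q)^{1/q}\sim C\sqrt q$; combined with the concentration below this forces $\mathcal M_{q,k}\sim C\sqrt q$. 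The concentration of the norm itself is the crux. I would view $\tilde F(w)=\|\sum_j w_j e_j\|_{L^q(S^d)}$ as a function on the unit sphere $S^{N_k-1}$ (which carries the law of the $l$-th row of $U$): it is Lipschitz in the Euclidean metric with constant at most $\sup\{\|h\|_{L^q}:h\in E_k,\ \|h\|_{L^2}=1\}\le C\lambda_k^{\delta(q)}$ by the deterministic eigenfunction bound recalled above. Lévy's concentration of measure on $S^{N_k-1}$ then yields
\[
\nu\bigl(\bigl|\,\|b_{k,l}\|_{L^q(S^d)}-\mathcal M_{q,k}\bigr|>r\bigr)\ \lesssim\ \exp\!\bigl(-c\,N_k\,r^2/\lambda_k^{2\delta(q)}\bigr).
\]
Since $N_k\sim k^{d-1}$ while $2\delta(q)<d-1$ \emph{strictly} for every finite $q$, the exponent is $\le -c\,k^{\epsilon(q)}r^2$ with $\epsilon(q)=d-1-2\delta(q)>0$; this genuine power-of-$k$ decay is exactly what lets the union bound over $l$ (a factor $N_k$) and the sum over $k$ be absorbed while leaving $\lesssim e^{-cr^2}$.

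Finally, to extract a single frame with property \eqref{eq:bbound}: taking $r$ large enough makes the complements of the three bad sets have positive $\nu$-measure; to get the bounds simultaneously for all $q$ one restricts to integer $q$, lets $r=r(q)\to\infty$ slowly so that the probabilities are summable, applies Borel--Cantelli, and fills in non-integer $q$ by log-convexity of $L^q$ norms — this produces a set of \emph{full} $\nu$-measure of admissible frames, any of which may be selected. The one genuinely delicate point is the $L^q$ concentration step: the Lipschitz constant $\lambda_k^{\delta(q)}$ of the map on $S^{N_k-1}$ grows with $k$, and one must verify that it is strictly dominated by the dimension $N_k\sim k^{d-1}$ — which is where the strict inequality $\delta(q)<\tfrac{d-1}{2}$ enters — so that after summing over the $\sim k^{2d-1}$ pairs $(k,l)$ one still has Gaussian decay in $r$. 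Everything else reduces to the pointwise sub-Gaussian estimate together with, in the $L^\infty$ case, the Bernstein net argument.
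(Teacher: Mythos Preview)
The paper does not prove this theorem at all: it is quoted as a result of Burq--Lebeau \cite{BurqLeb-1,BurqLeb} and used as a black box, so there is no ``paper's own proof'' to compare against. Your sketch is, in fact, a faithful outline of the Burq--Lebeau argument itself --- the identification of a random frame element with a row of a Haar orthogonal matrix, the addition formula giving the pointwise sub-Gaussian law of $b_{k,l}(x)$ as a rescaled coordinate on $S^{N_k-1}$, the Bernstein net argument for $L^\infty$, and L\'evy concentration on $S^{N_k-1}$ with Lipschitz constant controlled by the deterministic Sogge exponent $\lambda_k^{\delta(q)}$ for $L^q$ --- and the logic is sound.

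Two small comments worth keeping in mind when you write it up. First, the constants $C,c$ you obtain in the $L^q$ concentration bound depend on $q$ through $\epsilon(q)=d-1-2\delta(q)=2d/q$ (for large $q$), which tends to zero as $q\to\infty$; this is consistent with Burq--Lebeau and with the stated conclusion \eqref{eq:bbound}, but be aware that the uniform-looking constants in the displayed inequality are implicitly $q$-dependent. Second, in the $L^\infty$ step the reduction to a net of mesh $\sim 1/k$ rests on the Bernstein inequality $\|\nabla u\|_{L^\infty}\le Ck\|u\|_{L^\infty}$ for $u\in E_k$, which yields $\|u\|_{L^\infty}\le 2\max_{\Lambda_k}|u|$ once the mesh is $\le (2Ck)^{-1}$; you stated this correctly, just make sure the constant in the mesh is chosen before invoking the union bound so the argument is not circular.
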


Call such a frame $\{b_{k,l};k,l\geq 1\}$ a {\it{good frame}}. We shall use such a good frame to implement a suitable data randomisation in the sequel. 

\subsection{Randomization improved Strichartz estimates}
\subsubsection{Using {{good frames}}}\label{sec.1.2.1}
Pick a good frame $\{b_{k,l}\}_{k,l\geq 1}$ for $L^2(S^{n-1})$. Consider a function $f(x)$ on $\R^n$ supported at frequency $\sim 1$, and write its Fourier transform in terms of the good frame after passage to spherical coordinates $\rho, \theta$: 
\[
\hat{f}(\rho\theta) = \sum_{k,l}\hat{c}_{k,l}(\rho)b_{k,l}(\theta)
\]
In turn, this gives a representation of $f(x)$ in terms of the good basis as follows (see~\cite[Theorem 3.10]{StWe}):
\begin{align}\label{rep}
f(r\theta) = \sum_{k,l}2\pi i^kr^{\frac{2-n}{2}}b_{k,l}(\theta)\cdot\int_0^\infty J_{\frac{n-2}{2}+k}(2\pi r\rho)\hat{c}_{k,l}(\rho)\rho^{\frac{n}{2}}d\rho,
\end{align}
and we have 
\begin{equation}\label{rep2}
\sum_{k,l}\big\|\hat{c}_{k,l}(\rho)\big\|^2_{L^2_{d\rho}}\sim \big\|f\big\|_{L^2(\R^n)}^2. 
\end{equation}

Now let $h_{k,l}(\tilde{\omega})$ be a collection of real-valued independent random variables with distributions $\mu_{k,l}$ on some probability space, satisfying for some $c>0$ and all $\gamma\in \R$ the bounds 
\[
\big|\int_{\R}e^{\gamma x}d\mu_{k,l}\big|\leq e^{c\gamma^2},\,k,l\geq 1. 
\]
Introduce the functions 
\[
\hat{f}^{(\tilde{\omega})}(\rho\omega) := \sum_{k,l}h_{k,l}(\tilde{\omega})\hat{c}_{k,l}(\rho)b_{k,l}(\theta)
\]
Also, denote their inverse Fourier transform by $f^{(\tilde{\omega})}(r\omega)$. Finally, let 
\[
u^{(\tilde{\omega})}(t, x) = \big(e^{-it\sqrt{-\triangle}}f^{(\tilde{\omega})}\big)(x). 
\]
Then we can state the following
\begin{prop}\label{prop:randomStrich1} Let $(2, q)$ be admissible in the sense of \eqref{eq:radialStr}. Then for suitable positive constants $c, C$ we have 
\begin{equation}
\mathbf{P}\big(\{\big\|u^{(\tilde{\omega})}\big\|_{L_t^2 L_x^q(\R^{n+1})}>\lambda\}\big)\leq \begin{cases} C e^{-c\frac{\lambda^2}{\| f\|_{L^2(\R^n)}^2}} &\text{ if } q<+\infty\\
C e^{-c\frac{\lambda^2}{\|f\|_{H^{0+}(\R^n)}^2}} &\text{ if } q=+\infty
\end{cases}
\end{equation}
\end{prop}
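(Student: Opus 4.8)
The plan is to upgrade the claimed probabilistic statement to a moment bound: I will show that for every finite $p\geq\max(2,q)$
\[
\big\|\,\|u^{(\tilde\omega)}\|_{L_t^2L_x^q(\R^{n+1})}\,\big\|_{L^p_{\tilde\omega}}\ \lesssim\ \sqrt{p}\,\|f\|_{L^2(\R^n)}
\]
(with $\|f\|_{L^2}$ replaced by $\|f\|_{H^{0+}}$ when $q=+\infty$), and then deduce the tail estimate from the elementary large–deviation fact that a nonnegative random variable $F$ with $\|F\|_{L^p}\leq C_0A\sqrt{p}$ for all $p$ in a half-line obeys $\mathbf P(F>\lambda)\leq (C_0A\sqrt p/\lambda)^p$ by Chebyshev, which after optimizing in $p$ gives $\mathbf P(F>\lambda)\lesssim e^{-c\lambda^2/A^2}$ (adjusting $C,c$ to also cover small $\lambda$).

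For the moment bound with $q<+\infty$, I would first move the probabilistic norm inside using Minkowski's inequality, which is legitimate precisely because $p\geq\max(2,q)$:
\[
\big\|\,\|u^{(\tilde\omega)}\|_{L_t^2L_x^q}\,\big\|_{L^p_{\tilde\omega}}\ \leq\ \big\|\,\|u^{(\tilde\omega)}(t,x)\|_{L^p_{\tilde\omega}}\,\big\|_{L_t^2L_x^q}.
\]
Pointwise in $(t,x)$ we have $u^{(\tilde\omega)}(t,x)=\sum_{k,l}h_{k,l}(\tilde\omega)\,u_{k,l}(t,x)$ with $u_{k,l}=e^{-it\sqrt{-\triangle}}f_{k,l}$ and $\hat f_{k,l}(\rho\theta)=\hat c_{k,l}(\rho)b_{k,l}(\theta)$; the hypothesis $|\int_\R e^{\gamma x}d\mu_{k,l}|\leq e^{c\gamma^2}$ makes the $h_{k,l}$ subgaussian, hence by the Khintchine inequality $\|\sum_{k,l}h_{k,l}z_{k,l}\|_{L^p_{\tilde\omega}}\lesssim\sqrt p\,(\sum_{k,l}|z_{k,l}|^2)^{1/2}$, reducing matters to $\big\|(\sum_{k,l}|u_{k,l}(t,x)|^2)^{1/2}\big\|_{L_t^2L_x^q}\lesssim\|f\|_{L^2}$. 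Passing to polar coordinates, \eqref{rep} gives the separation of variables $u_{k,l}(t,r\theta)=b_{k,l}(\theta)\,w_{k,l}(t,r)$, where $w_{k,l}$ is the Hankel transform (in $\rho$) of $\hat c_{k,l}$ of Bessel order $\tfrac{n-2}{2}+k$. Since $q\geq 2$, Minkowski's inequality applied successively in $\theta$, then in $r$ and in $t$, combined with the good-frame bound \eqref{eq:bbound}, yields
\[
\Big\|\Big(\sum_{k,l}|u_{k,l}|^2\Big)^{1/2}\Big\|_{L_t^2L_x^q}\ \leq\ \Big(\sum_{k,l}\|b_{k,l}\|_{L^q_\theta}^2\,\|w_{k,l}\|_{L_t^2L^q_{r^{n-1}dr}}^2\Big)^{1/2}\ \lesssim\ \sqrt q\,\Big(\sum_{k,l}\|w_{k,l}\|_{L_t^2L^q_{r^{n-1}dr}}^2\Big)^{1/2}.
\]
Now I invoke the radial/angular-frequency Strichartz estimate obtained by the method of \cite{Ste}, namely the Hankel-transform bound $\|w_{k,l}\|_{L_t^2L^q_{r^{n-1}dr}}\lesssim\|\hat c_{k,l}\|_{L^2_{d\rho}}$ for $(2,q)$ in the improved range \eqref{eq:radialStr} \emph{with implicit constant independent of $k$}, and sum in $(k,l)$ using \eqref{rep2}; this closes the $q<+\infty$ case.

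For $q=+\infty$ (which forces $n\geq 3$) Minkowski cannot be used to enter $L^\infty_x$, so I would instead use the Sobolev embedding $\|g\|_{L^\infty(\R^n)}\lesssim\|g\|_{W^{s,\tilde q}(\R^n)}$ with $\tilde q<\infty$ large such that $(2,\tilde q)$ is admissible and $s>n/\tilde q$ small; applying this to $g=u^{(\tilde\omega)}(t,\cdot)$ and commuting $\langle\nabla\rangle^{s}$ through both the propagator and the randomization (it is a radial Fourier multiplier, so it preserves the frame decomposition) reduces everything to the already-established $L_t^2L_x^{\tilde q}$ bound applied to the data $\langle\nabla\rangle^{s}f$, whose $L^2$ norm is $\|f\|_{H^{s}}$; since $s$ may be chosen arbitrarily small this produces the claimed $e^{-c\lambda^2/\|f\|_{H^{0+}}^2}$.

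The main obstacle is the uniform-in-$k$ input in the last step of the $q<+\infty$ argument: that for $(2,q)$ in the improved range \eqref{eq:radialStr} the Bessel-order-$(\tfrac{n-2}{2}+k)$ transform maps $L^2_{d\rho}$ into $L_t^2L^q_{r^{n-1}dr}$ with a bound \emph{independent of $k$} is exactly what distinguishes \eqref{eq:radialStr} from the general Strichartz range, and is the technical heart, imported from the analysis of \cite{Ste}. Everything else is the (essential but soft) observation that a good frame converts the otherwise fatal growth $\|b_{k,l}\|_{L^q_\theta}\sim\langle k\rangle^{\delta(q)}$ of a generic degree-$k$ spherical harmonic into the harmless constant $\sqrt q$, after which all the interchanges of norms are justified by $q\geq 2$ and Minkowski, and the passage from moments to tails is standard.
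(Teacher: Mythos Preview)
Your proposal is correct and follows essentially the same route as the paper: Minkowski to bring $L^p_{\tilde\omega}$ inside, Khintchine for the subgaussian $h_{k,l}$, separation of variables $u_{k,l}=b_{k,l}(\theta)w_{k,l}(t,r)$, the good-frame bound \eqref{eq:bbound} on the angular factor, the uniform-in-$k$ radial $L_t^2L^q_{r^{n-1}dr}$ estimate from \cite{Ste}, and the standard moment-to-tail conversion (which the paper cites as Lemma~4.5 of \cite{Tzve}); the $q=\infty$ reduction via Sobolev is also identical. The only cosmetic difference is that the paper unpacks the Sterbenz input explicitly---expanding $\hat c_{k,l}(\rho)=\sum_\nu c^\nu_{k,l}e^{i\pi\nu\rho/2}$, quoting the pointwise Bessel bound, and interpolating the resulting $L^\infty_r$ estimate against the energy bound to reach \eqref{eq:3}---whereas you invoke the conclusion $\|w_{k,l}\|_{L_t^2L^q_{r^{n-1}dr}}\lesssim\|\hat c_{k,l}\|_{L^2_{d\rho}}$ directly.
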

\begin{proof} If $(2,+\infty)$ is admissible in the sense of \eqref{eq:radialStr}, then $(2,q)$ is also admissible for $q$ sufficiently large and the second estimate follows from Sobolev embeddings $W^{\epsilon, q} \rightarrow L^\infty$ for $\epsilon >0$ and $q$ sufficiently large. We now assume $q<\infty$.  Write (with $\omega\in S^{n-1}$)
\begin{align*}
u^{(\tilde{\omega})}(t, r\omega) = \sum_{k,l}2\pi i^kr^{\frac{2-n}{2}}h_{k,l}(\tilde{\omega})b_{k,l}(\theta)\cdot\int_0^\infty e^{-2\pi i t\rho}J_{\frac{n-2}{2}+k}(2\pi r\rho)\hat{c}_{k,l}(\rho)\rho^{\frac{n}{2}}d\rho
\end{align*}
Then using Minkowski's inequality and Lemma 3.1 from \cite{BurqTzve}, we get for~$s\geq q$ 
\begin{equation}\label{eq:1}\begin{split}
&\big\|u^{(\tilde{\omega})}\big\|_{L^s_{\tilde{\omega}}L_t^2 L_x^q}\\&\leq C\sqrt{s}\big\|\big(\sum_{k,l}\big|r^{\frac{2-n}{2}}b_{k,l}(\theta)\cdot\int_0^\infty e^{-2\pi i t\rho}J_{\frac{n-2}{2}+k}(2\pi r\rho)\hat{c}_{k,l}(\rho)\rho^{\frac{n}{2}}d\rho\big|^2\big)^{\frac12}\big\|_{L_t^2 L_x^q}
\\
&\leq C\sqrt{s}\Big(\sum_{k,l}\big\| r^{\frac{2-n}{2}}b_{k,l}(\theta)\cdot\int_0^\infty e^{-2\pi i t\rho}J_{\frac{n-2}{2}+k}(2\pi r\rho)\hat{c}_{k,l}(\rho)\rho^{\frac{n}{2}}d\rho \big\|^2_{L_t^2 L_x^q}\Bigr)^{\frac 1 2}
\end{split}\end{equation}
Following \cite{Ste} we expand $\hat{c}_{k,l}(\rho) = \sum_{\nu\in \Z} c^{\nu}_{k,l}e^{i\frac{\pi}{2}\nu\rho}$ which upon substitution in the preceding formula leads to 
\begin{align*}
&r^{\frac{2-n}{2}}b_{k,l}(\theta)\cdot\int_0^\infty e^{-2\pi i t\rho}J_{\frac{n-2}{2}+k}(2\pi r\rho)\hat{c}_{k,l}(\rho)\rho^{\frac{n}{2}}d\rho\\
& = \alpha_{k}b_{k,l}(\theta)\sum_{\nu}r^{\frac{2-n}{2}}c^{\nu}_{k,l}\psi^k_{t-\frac{\nu}{4}}(r), 
\end{align*}
where we use 
\[
\psi^k_{t-\frac{\nu}{4}}(r) = \int_0^\infty J_{\frac{n-2}{2}+k}(2\pi r\rho) e^{-2\pi i(t-\frac{\nu}{4})\rho}\chi(\rho)\,d\rho,
\]
with $\chi$ a suitable smooth bump function localizing around the support of $\hat{f}(\rho\omega)$ with respect to $\rho$. But then from \cite{Ste} (see identities (82), (83) in loc. cit.) we have the bound 
\begin{align*}
&\big|\sum_{\nu}r^{\frac{2-n}{2}}c^{\nu}_{k,l}\psi^k_{t-\frac{\nu}{4}}(r)\big|\\&\lesssim \sum_{\nu}\frac{c^{\nu}_{k,l}}{(1+|t - \frac{\nu}{4}|)^{\frac{n-1}{2}}(1+\big| r- |t-\frac{\nu}{4}|\big|)^{\frac12}}\big[\frac{1}{(1+\big| r- |t-\frac{\nu}{4}|\big|)^{\frac12}} + R(k, |t-\frac{\nu}{4}|, r)\big],\\
\end{align*}
with $\sum_{\nu}\frac{1}{(1+\big| r- |t-\frac{\nu}{4}|\big|)}R^2(k, |t-\frac{\nu}{4}|, r)\lesssim 1$, and so application of the Cauchy-Schwarz inequality leads to the bound
\begin{align*}
\big\|\sum_{\nu}r^{\frac{2-n}{2}}c^{\nu}_{k,l}\psi^k_{t-\frac{\nu}{4}}(r)\big\|_{L^\infty( \mathbb{R}^+)}\lesssim \big(\sum_{\nu}\frac{|c^{\nu}_{k,l}|^2}{(1+|t - \frac{\nu}{4}|)^{n-1}}\big)^{\frac12}.
\end{align*}
Interpolating this with the simple energy bound 
\begin{equation*}
\big\|\big(\big|r^{\frac{2-n}{2}}\int_0^\infty e^{-2\pi i t\rho}J_{\frac{n-2}{2}+k}(2\pi r\rho)\hat{c}_{k,l}(\rho)\rho^{\frac{n}{2}}d\rho\big|^2\big)^{\frac12}\big\|_{L^2( \mathbb{R}^+, r^{n-1} dr)}\lesssim \big(\sum_{\nu}|c_{k,l}^{\nu}|^2\big)^{\frac12}, 
\end{equation*}
we find the bound 
\begin{equation}\label{eq:3}\begin{split}
&\big\| r^{\frac{2-n}{2}}b_{k,l}(\theta)\cdot\int_0^\infty e^{-2\pi i t\rho}J_{\frac{n-2}{2}+k}(2\pi r\rho)\hat{c}_{k,l}(\rho)\rho^{\frac{n}{2}}d\rho \big\|^2_{L^q( \mathbb{R}^+, r^{n-1} dr)}\\
&\lesssim \sum_{\nu}\frac{|c^{\nu}_{k,l}|^2}{(1+|t - \frac{\nu}{4}|)^{1+\epsilon_q}}.
\end{split}\end{equation}
for any $q$ such that $(2,q)$ is admissible in the sense of \eqref{eq:radialStr}, with $\epsilon_q>0$ a suitable positive number.

Keeping in mind that $b_{k,l}$ is a good frame satisfying \eqref{eq:bbound}, we then infer that
\begin{align*}
&\big\|\big(\sum_{k,l}\big|r^{\frac{2-n}{2}}b_{k,l}(\theta)\cdot\int_0^\infty e^{-2\pi i t\rho}J_{\frac{n-2}{2}+k}(2\pi r\rho)\hat{c}_{k,l}(\rho)\rho^{\frac{n}{2}}d\rho\big|^2\big)^{\frac12}\big\|_{L_x^q}\\
&\lesssim \big(\sum_{k,l}\sum_{\nu}\frac{|c^{\nu}_{k,l}|^2}{(1+|t - \frac{\nu}{4}|)^{n-1}}\big)^{\frac12}.
\end{align*}

Keeping in mind \eqref{eq:1} and substituting the preceding bound, we infer that 
\begin{equation}\label{eq:4}\begin{split}
&\big\|u^{(\tilde{\omega})}\big\|_{L^s_{\tilde{\omega}}L_t^2 L_x^q}\lesssim_q\sqrt{s}\big(\sum_{\nu} |c^{\nu}_{k,l}|^2\big)^{\frac12}\lesssim \sqrt{s}\|  f\big\|_{L^2(\R^n)}, s\geq q. 
\end{split}\end{equation}

The proposition is then a consequence of lemma 4.5 in \cite{Tzve}. 

\end{proof}
\subsubsection{A non pinching condition}
In this section we show how we can avoid the choice of a particular frame and work directly in an arbitrary eigenbasis of spherical harmonics. We start as previously and consider a function $f(x)$ on $\R^n$ supported at frequency $\sim 1$, and write its Fourier transform in terms of an arbitrary frame after passage to spherical coordinates $\rho, \omega$: 
\[
\hat{f}(\rho\omega) = \sum_{k,l}\hat{c}_{k,l}(\rho)b_{k,l}(\theta)
\]
In turn, this gives a representation of $f(x)$ in terms of the  basis by~\eqref{rep} with~\eqref{rep2}. We now assume that the decomposition~\eqref{rep} satisfies the following {\em non pinching } condition (see~\cite[(1.3]{PoRoTh})
\begin{assum}\label{asum.1}There exists $C>0$ such that for   any $k$ the projection 
$$\Pi_k (\hat{f}(\rho\omega)) = \sum_{l}\hat{c}_{k,l}(\rho)b_{k,l}(\theta)$$ on the $N_k$ dimensional space spanned by the spherical harmonics of degree $k$ satisfies
$$\forall \nu \in \mathbb{Z},  \forall k,  l; |l| \leq N_k  , | \hat{c}^\nu_{k,l}|^2  \leq \frac{ C} {N_k} \sum_l | \hat{c}^\nu_{k,l}|^2, \qquad  \hat{c}_{k,l}(\rho) = \sum_{\nu\in \Z} c^{\nu}_{k,l}e^{i\frac{\pi}{2}\nu\rho}$$
\end{assum}
We now randomize the function $f$ using the exact same procedure as in Section~\ref{sec.1.2.1}.
We now have 
\begin{prop}\label{prop:randomStrich1-bis} Let $(2, q)$ be admissible in the sense of \eqref{eq:radialStr}. Then for suitable positive constants $c, C$ we have under this new randomization the same results as in Proposition~\ref{prop:randomStrich1}
\begin{equation}
\mathbf{P}\big(\{\big\|u^{(\tilde{\omega})}\big\|_{L_t^2 L_x^q(\R^{n+1})}>\lambda\}\big)\leq \begin{cases} C e^{-c\frac{\lambda^2}{\| f\|_{L^2(\R^n)}^2}} &\text{ if } q<+\infty\\
C e^{-c\frac{\lambda^2}{\|f\|_{H^{0+}(\R^n)}^2}} &\text{ if } q=+\infty
\end{cases}
\end{equation}
\end{prop}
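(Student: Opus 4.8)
The plan is to repeat the argument of Proposition~\ref{prop:randomStrich1} essentially verbatim, the only modification being the one place where the good-frame bound \eqref{eq:bbound} was invoked; there the role of \eqref{eq:bbound} will instead be played by Assumption~\ref{asum.1} together with the (basis-independent) addition formula for spherical harmonics. First I would carry out the same reduction: applying Minkowski's inequality and Lemma~3.1 of \cite{BurqTzve} to
\[
u^{(\tilde\omega)}(t,r\omega)=\sum_{k,l}2\pi i^kh_{k,l}(\tilde\omega)b_{k,l}(\omega)\,\phi_{k,l}(t,r),\qquad
\phi_{k,l}(t,r):=r^{\frac{2-n}{2}}\int_0^\infty e^{-2\pi it\rho}J_{\frac{n-2}{2}+k}(2\pi r\rho)\hat{c}_{k,l}(\rho)\rho^{\frac n2}\,d\rho,
\]
and expanding $\hat{c}_{k,l}(\rho)=\sum_{\nu\in\Z}c^\nu_{k,l}e^{i\frac\pi2\nu\rho}$, one is reduced to the square-function estimate
\[
\Big\|\Big(\sum_{k,l}|b_{k,l}(\omega)|^2\,|\phi_{k,l}(t,r)|^2\Big)^{1/2}\Big\|_{L^2_tL^q_x(\R^{n+1})}\lesssim_q\|f\|_{L^2(\R^n)},
\]
valid whenever $(2,q)$ is admissible in the sense of \eqref{eq:radialStr}; moreover \eqref{eq:3} (with the angular factor $b_{k,l}$ kept separate) supplies, for each fixed $t$, the radial bound $\|\phi_{k,l}(t,\cdot)\|_{L^q(\R^+,\,r^{n-1}dr)}^2\lesssim\sum_\nu(1+|t-\nu/4|)^{-1-\epsilon_q}|c^\nu_{k,l}|^2$ for some $\epsilon_q>0$. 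Once this square-function estimate is established, the claimed tail bound is obtained from \cite[Lemma~4.5]{Tzve} and the case $q=+\infty$ is reduced to large finite $q$ by Sobolev embedding, exactly as in Proposition~\ref{prop:randomStrich1}.

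The one genuinely new step is the treatment of the $l$-summation at a fixed degree $k$, where a single $b_{k,l}$ need no longer be bounded in $L^q(S^{n-1})$. Here I would proceed as follows. For each fixed angle $\omega$, Minkowski's inequality in $L^{q/2}(\R^+,r^{n-1}dr)$ (legitimate since $q\ge2$) lets one move the $l$-sum inside the radial norm, so that, using the radial bound above,
\begin{align*}
\Big\|\Big(\sum_l|b_{k,l}(\omega)|^2|\phi_{k,l}(t,r)|^2\Big)^{1/2}\Big\|_{L^q(\R^+,r^{n-1}dr)}^2
&\le\sum_l|b_{k,l}(\omega)|^2\,\|\phi_{k,l}(t,\cdot)\|_{L^q(\R^+,r^{n-1}dr)}^2\\
&\lesssim\sum_l|b_{k,l}(\omega)|^2\sum_\nu\frac{|c^\nu_{k,l}|^2}{(1+|t-\frac\nu4|)^{1+\epsilon_q}}.
\end{align*}
Now Assumption~\ref{asum.1} furnishes $|c^\nu_{k,l}|^2\le\frac{C}{N_k}\sum_{l'}|c^\nu_{k,l'}|^2$ \emph{uniformly in $l$}, so the $l$-summation decouples from the rest, and the addition theorem $\sum_{l=1}^{N_k}|b_{k,l}(\omega)|^2=N_k/|S^{n-1}|$ --- which holds for every orthonormal basis of the degree-$k$ eigenspace, the reproducing kernel of that space being intrinsic and rotation invariant --- makes the resulting bound independent of $\omega$:
\[
\sum_l|b_{k,l}(\omega)|^2\sum_\nu\frac{|c^\nu_{k,l}|^2}{(1+|t-\frac\nu4|)^{1+\epsilon_q}}
\ \lesssim\ \frac{1}{N_k}\Big(\sum_l|b_{k,l}(\omega)|^2\Big)\sum_\nu\frac{\sum_{l'}|c^\nu_{k,l'}|^2}{(1+|t-\frac\nu4|)^{1+\epsilon_q}}
\ \lesssim\ \sum_\nu\frac{\sum_{l'}|c^\nu_{k,l'}|^2}{(1+|t-\frac\nu4|)^{1+\epsilon_q}}.
\]
In other words, the uniform $L^q(S^{n-1})$-bound on the $b_{k,l}$ that was available for a good frame is here replaced by the pair \emph{non-pinching condition + addition formula}, and --- crucially --- no growth in $k$ is introduced.

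To conclude, I would reassemble the estimates exactly as in Proposition~\ref{prop:randomStrich1}. Since the last bound is uniform in $\omega$, integrating over $S^{n-1}$ costs only the constant $|S^{n-1}|^{2/q}$; summing over $k$ via the triangle inequality in $L^{q/2}_x$ then yields
\[
\Big\|\Big(\sum_{k,l}|b_{k,l}(\omega)|^2|\phi_{k,l}(t,r)|^2\Big)^{1/2}\Big\|_{L^q_x}^2\ \lesssim\ \sum_\nu(1+|t-\nu/4|)^{-1-\epsilon_q}\sum_{k,l}|c^\nu_{k,l}|^2,
\]
and integrating in $t$ over $\R$ (using $\int_\R(1+|s|)^{-1-\epsilon_q}\,ds<\infty$), followed by Parseval and \eqref{rep2} in the form $\sum_{k,l}\sum_\nu|c^\nu_{k,l}|^2\sim\sum_{k,l}\|\hat{c}_{k,l}\|_{L^2_\rho}^2\sim\|f\|_{L^2}^2$, gives the square-function estimate, hence $\|u^{(\tilde\omega)}\|_{L^s_{\tilde\omega}L^2_tL^q_x}\lesssim_q\sqrt s\,\|f\|_{L^2}$ for $s\ge q$; the proposition then follows from \cite[Lemma~4.5]{Tzve}. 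The step I expect to need the most care in writing up is precisely the commutation of the angular $\ell^2_l$-sum past the radial $L^q$-norm and the verification that Assumption~\ref{asum.1} is exactly strong enough that $\sum_l|b_{k,l}(\omega)|^2$ enters only through its $\omega$-independent total mass $N_k/|S^{n-1}|$; everything else is literally the proof of Proposition~\ref{prop:randomStrich1}.
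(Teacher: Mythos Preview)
Your proposal is correct and follows essentially the same approach as the paper's own proof. Both arguments reuse the square-function reduction \eqref{eq:1} and the radial bound \eqref{eq:3} from Proposition~\ref{prop:randomStrich1}, then replace the good-frame estimate \eqref{eq:bbound} by the combination of Assumption~\ref{asum.1} with the addition formula $\sum_l|b_{k,l}(\omega)|^2=N_k/|S^{n-1}|$ (which the paper likewise derives from rotation invariance of the spectral projector kernel), and conclude via \cite[Lemma~4.5]{Tzve}; the only cosmetic difference is that the paper first bounds $|c^\nu_{k,l}|^2\le\max_l|c^\nu_{k,l}|^2$ before invoking the non-pinching condition, whereas you apply Assumption~\ref{asum.1} directly --- these are equivalent.
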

\begin{proof}
We revisit the proof of Proposition~\ref{prop:randomStrich1} and get from~\eqref{eq:1},~\eqref{eq:3}
\begin{equation}\label{eq:1bis}\begin{split}
&\big\|u^{(\tilde{\omega})}\big\|_{L^s_{\tilde{\omega}}L_t^2 L_x^q} \leq C\sqrt{s}\Big(\big\| \Bigl(\sum_{\nu}\sum_{k,l}\frac{|c^{\nu}_{k,l}|^2}{(1+|t - \frac{\nu}{4}|)^{1+\epsilon_q}}|b_{k,l} (\omega)|^2 \Bigr)^{1/2} \big\|^2_{L_t^2 L_{\theta}^q}\Bigr)^{\frac 1 2}\\
&\leq C\sqrt{s} \Big(\big\| \Bigl(\sum_{\nu}\sum_{k} \frac{ \max_{l} |c^{\nu}_{k,l}|^2}{(1+|t - \frac{\nu}{4}|)^{1+\epsilon_q}} \sum_{l } |b_{k,l} (\omega)|^2 \Bigr)^{1/2} \big\|^2_{L_t^2 L_{\omega}^q}\Bigr)^{\frac 1 2}
\end{split}\end{equation}
Following~\cite[Lemme 3.1]{BurqLeb} we now remark that 
$$(\theta, \widetilde \theta)\in (\mathbb{S}^{n-1})^2 \mapsto K_k( \theta, \widetilde \theta) = \sum_l b_{k,l} ( \theta), \overline{ b_{k,l}}(\widetilde \theta) $$
is the kernel of the spectral projector on $E_k$ the subspace of $L^2 ( \mathbb{S}^{n-1})$ spanned by the spherical harmonics of degree $k$. It is consequently invariant by conjugations by isometries of the sphere, which means, for any such isometry $J$
$$ K_k(J\theta , J\widetilde \theta) = K_k (\theta, \widetilde \theta),$$
which implies (since the group of isometries acts transitively on the sphere) that the function $\omega \mapsto K_k ( \omega, \omega)$ is constant on the sphere with mean value equal to 
$$ \sum_{l} \| b_{k,l}\|_{L^2}^2= N_k,$$ which implies
$$ \forall k, \sum_{l} | b_{k,l}(\theta)|^2 = \frac{ N_k} {\text{Vol} ( \mathbb{S}^{n-1})}.$$
 Plugging this into the r.h.s. of~\eqref{eq:1bis}  (remark that since what we get does not depend on $\omega$ any more, the $L^q_\omega$ norm becomes irrelevant) and using Assumption~\ref{asum.1} gives  
\begin{multline}\label{eq:1ter}
\big\|u^{(\tilde{\omega})}\big\|_{L^s_{\tilde{\omega}}L_t^2 L_x^q} \leq C\sqrt{s} \Big(\big\| \Bigl(\sum_{\nu}\sum_{k} \frac{ N_k \max_{l} |c^{\nu}_{k,l}|^2}{(1+|t - \frac{\nu}{4}|)^{1+\epsilon_q}}  \Bigr)^{1/2} \big\|^2_{L_t^2}\Bigr)^{\frac 1 2}\\
\leq C\sqrt{s} \Big(\big\| \Bigl(\sum_{\nu}\sum_{k} \frac{ C\sum_{l} |c^{\nu}_{k,l}|^2}{(1+|t - \frac{\nu}{4}|)^{1+\epsilon_q}}  \Bigr)^{1/2} \big\|^2_{L_t^2}\Bigr)^{\frac 1 2}\leq C \sqrt{s} \Bigl(\sum_{\nu,k,l} |c_{k,l}^\nu|^2\Bigr)^{1/2}.
\end{multline}
The proposition is then a consequence of lemma 4.5 in \cite{Tzve}. 
\end{proof}

\subsection{A refinement; microlocalized Strichartz estimates}

For applications of the estimates derived in the preceding subsection, and in particular for deriving estimates {\it{which beat the natural scaling}}, it is useful to also control certain square sums over pieces which are box-localised in Fourier space. 
Specifically, it shall be useful to control norms of the form 
\[
\big(\sum_{c}\big\|P_c u^{(\tilde{\omega})}(t, x)\big\|_{L_t^p L_x^q}^2\big)^{\frac12},
\]
where $c$ ranges over a covering of the annulus $\rho\sim 1$ in Fourier space by boxes of diameter $\sim \mu\lesssim 1$. Here, after re-scaling to frequency $\sim 2^k$, $k\gg 1$, we shall put $\mu = 1$. The point here shall be to deduce a bound which beats the 'trivial' estimates obtained by Cauchy-Schwarz and interpolation. 
In order to achieve the optimal such bound, we shall have to implement another randomisation, this time with respect to the radial direction. Specifically, divide the interval $\rho\sim 1$ into subintervals $I$ of length $\sim \mu$. Then, for each $k,l$, write 
\[
\hat{c}_{k,l}(\rho) = \sum_{I}\chi_I(\rho)\hat{c}_{k,l}(\rho) =:\sum_I \hat{c}_{k,l}^{(I)}(\rho)
\]
For each $I$ write 
\begin{align*}
\hat{c}_{k,l}^{(I)}(\rho) = \sum_{\nu\in Z}c_{k,l}^{(I),\nu}e^{i\frac{\pi}{2}\nu\rho}
\end{align*}
Finally, let $h_{k,l}^{(I), \nu}(\tilde{\omega}_1)$ be a family of independent random variables on a probability space $\Omega_1$, and consider the randomised functions 
\[
\sum_{\nu\in Z}c_{k,l}^{(I),\nu}h_{k,l}^{(I), \nu}(\tilde{\omega}_1)e^{i\frac{\pi}{2}\nu\rho},
\]
and so we replace $\hat{c}_{k,l}^{(I)}(\rho)$ by 
\[
\sum_I\sum_{\nu\in Z}c_{k,l}^{(I),\nu}h_{k,l}^{(I), \nu}(\tilde{\omega}_1)e^{i\frac{\pi}{2}\nu\rho}.
\]
Call the resulting free wave $u^{(\tilde{\omega}, \tilde{\omega}_1)}$. 
Below, we shall denote by $\mathbf{P}$ the probability space $\Omega\times \Omega_1$.

\begin{prop}\label{prop:randomStrich2} Let $f, u^{(\tilde{\omega},\tilde{\omega}_1)}$ be as in the Section~\ref{sec.1.2.1}. Also, let $0<\mu\lesssim 1$ be a length scale, and pick for such $\mu$ a uniformly finitely overlapping cover $\mathcal{C}$ of the annulus $\rho\sim 1$ by cubes $c$ of diameter $\mu$. Denote by $P_c$ a Fourier multiplier which localizes smoothly to $c$. Then for $q =\frac{2(n-1)}{n-2}+$, we have 
\begin{align*}
\mathbf{P}\big(\{\big(\sum_{c\in\mathcal{C}}\big\|P_c u^{(\tilde{\omega}, \tilde{\omega}_1)}\big\|_{L_t^2 L_x^q}^2\big)^{\frac12}>\lambda\mu^{-\frac{n-2}{2(n-1)}-}\}\big)\lesssim De^{-d{\lambda^2}/{\| f\|_{L^2(\R^n)}^2}}
\end{align*}
for suitable positive constants $D, d$ (which, in addition to the implicit constant, depend on $q$).
\end{prop}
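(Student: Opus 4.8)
The plan is to follow the two-randomization strategy already set up before the statement, adapting the single-randomization argument of Proposition~\ref{prop:randomStrich1} to the square function $\big(\sum_c \|P_c u^{(\tilde\omega,\tilde\omega_1)}\|_{L^2_t L^q_x}^2\big)^{1/2}$. First I would fix the length scale $\mu$ and, after the usual rescaling, reduce to the case $\mu = 1$; the power $\mu^{-\frac{n-2}{2(n-1)}-}$ in the statement is exactly the scaling factor that is picked up when one rescales a frequency-$\mu$ piece back to frequency $\sim 1$ and measures in $L^2_t L^q_x$ with $q = \frac{2(n-1)}{n-2}+$, so the claim is scaling-consistent and it suffices to prove the unit-scale estimate with $\mu = 1$. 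For the large deviation conclusion, as in Propositions~\ref{prop:randomStrich1} and~\ref{prop:randomStrich1-bis}, it is enough by Lemma 4.5 of \cite{Tzve} to prove the moment bound
\begin{equation*}
\Big\| \big(\textstyle\sum_{c\in\mathcal C}\|P_c u^{(\tilde\omega,\tilde\omega_1)}\|_{L^2_t L^q_x}^2\big)^{1/2}\Big\|_{L^s_{\tilde\omega,\tilde\omega_1}} \lesssim_q \sqrt s \,\mu^{-\frac{n-2}{2(n-1)}-}\,\|f\|_{L^2(\R^n)},\qquad s\ge q.
\end{equation*}

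Next I would apply Khintchine / Lemma 3.1 of \cite{BurqTzve} in the variables $(\tilde\omega,\tilde\omega_1)$ jointly, pulling the $L^s_{\tilde\omega,\tilde\omega_1}$ norm inside. Because the randomization now attaches an independent Gaussian to each triple $(k,l,I,\nu)$, and because the multipliers $P_c$ for $c$ in the cover $\mathcal C$ are (up to finite overlap) indexed by the radial intervals $I$ together with the angular caps, the square sum over $c$ combines with the square function coming from Khintchine into a single square sum over all the indices $(k,l,I,\nu)$. Concretely, one gets an upper bound of the shape
\begin{equation*}
\sqrt s\,\Big\| \Big(\textstyle\sum_{k,l}\sum_I\sum_\nu r^{2-n}\,|b_{k,l}(\theta)|^2\,\big|\psi^{k,I}_{t-\nu/4}(r)\big|^2\,|c_{k,l}^{(I),\nu}|^2 \Big)^{1/2}\Big\|_{L^2_t L^q_x},
\end{equation*}
where $\psi^{k,I}_{t-\nu/4}$ is the radial kernel from \cite{Ste} now localized also to the radial interval $I$. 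Then I would invoke the pointwise/dispersive bounds from \cite{Ste} (identities (82)--(83)) exactly as in Proposition~\ref{prop:randomStrich1}, interpolated against the trivial energy bound, to control $\big\|r^{\frac{2-n}{2}}b_{k,l}(\theta)\psi^{k,I}_{t-\nu/4}(r)\big\|_{L^q_x}^2$ by $\frac{|c_{k,l}^{(I),\nu}|^2}{(1+|t-\nu/4|)^{1+\epsilon_q}}$ up to the good-frame constant from \eqref{eq:bbound}; summing the $(1+|t-\nu/4|)^{-1-\epsilon_q}$ weight in $t$ over $\nu$ and using \eqref{rep2} (which survives the decomposition into intervals $I$ since the $\chi_I$ are a partition) then collapses everything to $\sqrt s\,\big(\sum_{k,l,I,\nu}|c_{k,l}^{(I),\nu}|^2\big)^{1/2}\lesssim \sqrt s\,\|f\|_{L^2}$ — at unit scale, with the $\mu$-power reinstated by undoing the rescaling.

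The point that requires the most care — and where the extra radial randomization is genuinely used — is keeping track of the gain at scale $\mu$: without the radial randomization, Cauchy--Schwarz in the $c$-sum against the endpoint Strichartz norm only gives the ``trivial'' bound, and the whole content of the proposition is that randomizing in $\rho$ upgrades this to the full $\mu^{-\frac{n-2}{2(n-1)}-}$ exponent, which is the one consistent with the radial-type range \eqref{eq:radialStr} at the specific exponent $q = \frac{2(n-1)}{n-2}+$. So the main obstacle is to verify that, after Khintchine, the square sum over the cover $\mathcal C$ is \emph{exactly} absorbed into the orthogonality \eqref{rep2} with no loss beyond the scaling factor — i.e. that the box-localized pieces $P_c u^{(\tilde\omega,\tilde\omega_1)}$ are, in square-sum, controlled by the same per-mode quantities $|c_{k,l}^{(I),\nu}|^2$ with only the declared $\mu$-power of loss. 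I would double-check this by tracking the $\mu$-dependence through the rescaling of the Bessel kernel $\psi^{k,I}$ and confirming that the exponent $-\frac{n-2}{2(n-1)}-$ matches the $L^q_x$-scaling of a frequency-$\mu$ bump at the chosen $q$; the ``$+$'' in $q$ and the ``$-$'' in the exponent absorb the logarithmic losses from the good-frame bound \eqref{eq:bbound} and from $\epsilon_q\to 0$ as $q$ approaches the radial endpoint.
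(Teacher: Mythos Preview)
Your reduction ``rescale to $\mu=1$'' does not work: the cubes $c$ of diameter $\mu$ sit inside the unit annulus $\rho\sim 1$, so each $P_c f$ is still at frequency $|\xi|\sim 1$, not at frequency $\sim\mu$; there is no dilation that simultaneously makes the cubes unit-sized and keeps the annulus at $\rho\sim 1$, and the exponent $-\frac{n-2}{2(n-1)}$ is \emph{not} a pure scaling factor. More seriously, your claim that ``the square sum over $c$ combines with the square function coming from Khintchine into a single square sum over $(k,l,I,\nu)$'' is where the argument breaks. The randomization indices are $(k,l)$ (angular harmonics) and $(I,\nu)$ (radial), but for each fixed radial interval $I$ there are $\sim\mu^{-(n-1)}$ cubes $c$ with $I(c)=I$, indexed by \emph{angular caps}, and this cap index is not among the randomized labels. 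The good-frame functions $b_{k,l}$ are spread over the whole sphere, so the $(k,l)$-sum does not diagonalize the cap sum. If you bound $\|\check\chi_c*[b_{k,l}r^{(2-n)/2}\psi^k_{t-\nu/4}]\|_{L^q_x}$ by the $c$-independent Sterbenz estimate you quote and then take $\ell^2_c$ over the $\sim\mu^{-(n-1)}$ caps, you lose $\mu^{-(n-1)/2}$, which is far worse than the claimed $\mu^{-\frac{n-2}{2(n-1)}-}$.

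The paper keeps $\mu$ general and confronts the angular $c$-sum directly through the physical-space convolution $P_c=\check\chi_c*$. The missing ingredient is a pointwise $L^\infty$-bound on $\check\chi_c*[g(\theta)r^{(2-n)/2}\psi^k_{t-\nu/4}(r)]$ that \emph{gains} a factor $\mu^{1/2}$: since $\|\check\chi_c\|_{L^\infty}\lesssim\mu^n$ and $\check\chi_c$ is essentially supported on a ball of radius $\mu^{-1}$, convolving against the $(1+|r-|t-\nu/4||)^{-1/2}$ profile from \cite{Ste} smooths that square-root singularity over scale $\mu^{-1}$ and yields $\mu^n\cdot\mu^{-(n-1/2)}=\mu^{1/2}$. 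This $L^\infty$-bound (as a sup over $c$ with $I(c)=I$) is then interpolated against the trivial $\ell^2_c L^2_x$-bound from Plancherel to produce an $\ell^q_c L^q_x$-bound with gain $\mu^{1/(2(n-1))-}$ at $q=\frac{2(n-1)}{n-2}+$; a final H\"older from $\ell^q_c$ to $\ell^2_c$ over the $\sim\mu^{-(n-1)}$ caps costs exactly enough to leave the net loss $\mu^{-\frac{n-2}{2(n-1)}-}$. This convolution/interpolation step is the actual content of the proposition and cannot be replaced by a scaling argument.
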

\begin{proof} We follow a similar procedure as in the preceding proof. Let $\check{\chi}_{c}(x)$ be the inverse Fourier transform of the smooth localiser $\chi_{c}$ which realises $P_c$. Note that $\check{\chi}_{c}$ rapidly decays beyond scale $\mu^{-1}$, and we have $\big\|\check{\chi}_{c}\big\|_{L^\infty}\lesssim \mu^n$. 
\\
Picking $s\geq q$, we have by Minkowski's inequality
\begin{equation}\label{eq:5}
\big\|\big\|\{\big\|P_c u^{(\tilde{\omega},\tilde{\omega}_1)}\big\|_{L_t^2 L_x^q}\}\big\|_{l^2_c}\big\|_{L^s_{\tilde{\omega},\tilde{\omega}_1}}\leq \big\|\{\big\|\big\|P_c u^{(\tilde{\omega},\tilde{\omega}_1)}\big\|_{L^s_{\tilde{\omega},\tilde{\omega}_1}}\big\|_{L_t^2 L_x^q}\}\big\|_{l^2_c}
\end{equation}
On the other hand, observe that we can write
\begin{align*}
&P_c u^{(\tilde{\omega}, \tilde{\omega}_1)}(t, r\omega)\\& = P_c\big(\sum_{k,l, \nu}2\pi i^kr^{\frac{2-n}{2}}h_{k,l}(\tilde{\omega})b_{k,l}(\theta)\cdot\int_0^\infty e^{-2\pi i (t - \frac{\nu}{4})\rho}J_{\frac{n-2}{2}+k}(2\pi r\rho)c_{k,l}^{(I(c)), \nu}h_{k,l}^{(I(c)), \nu}(\tilde{\omega}_1)\rho^{\frac{n}{2}}d\rho\big), 
\end{align*}
where $I(c)$ is an interval of length $\sim \mu$ essentially uniquely associated with $c\in \mathcal{C}$. Carrying out the integral, we find 
\begin{align*}
&P_c u^{(\tilde{\omega}, \tilde{\omega}_1)}(t, r\omega) \sim \sum_{k,l, \nu}h_{k,l}(\tilde{\omega})h_{k,l}^{(I(c)), \nu}(\tilde{\omega}_1)c_{k,l}^{(I(c)), \nu}\check{\chi}_{c}*\big[b_{k,l}(\theta)r^{\frac{2-n}{2}}\psi^k_{t-\frac{\nu}{4}}(r)\big]
\end{align*}
where $\sim $ indicates 'up to an irrelevant constant'. We conclude that for any $s\geq q$ we have 
\begin{align*}
\big\|P_c u^{(\tilde{\omega},\tilde{\omega}_1)}\big\|_{L^s_{\tilde{\omega},\tilde{\omega}_1}}\leq D \sqrt{s} \big(\sum_{k,l, \nu}\big|c_{k,l}^{(I(c)), \nu}\big|^2 \big|\check{\chi}_{c}*\big[b_{k,l}(\theta)r^{\frac{2-n}{2}}\psi^k_{t-\frac{\nu}{4}}(r)\big]\big|^2\big)^{\frac12}
\end{align*}
It follows that in order to bound the right hand side of \eqref{eq:5}, we need to bound 
\begin{multline*}
\big\|\big(\sum_c\big\|\big(\sum_{k,l, \nu}\big|c_{k,l}^{(I(c)), \nu}\big|^2 \big|\check{\chi}_{c}*\big[b_{k,l}(\theta)r^{\frac{2-n}{2}}\psi^k_{t-\frac{\nu}{4}}(r)\big]\big|^2\big)^{\frac12}\big\|_{L_x^q}^2\big)^{\frac12}\big\|_{L_t^2}, \\
\lesssim\big\| \bigl( \sum_{k,l,\nu. I} \sum_{c; I(c)=I} \| c_{k,l}^{I, \nu} \check{\chi}_{c}*\big[b_{k,l}(\theta)r^{\frac{2-n}{2}}\psi^k_{t-\frac{\nu}{4}}(r)\big]\big\|_{L_x^q}^2\bigr)^{1/2} \|_{L^2_t}
\end{multline*}
and more specifically the inner expression without the outer norm $\|\cdot\|_{L_t^2}$. This we shall achieve as in \cite{Ste} via interpolation between bounds for $q = \infty$ and $q = 2$. 
Then using the same point wise bounds as before, we find that for any function $g$ on the sphere $\mathbb{S}^{n-1}$ 
\begin{align*}
 \big|\check{\chi}_{c}*\big[g(\omega)r^{\frac{2-n}{2}}\psi^k_{t-\frac{\nu}{4}}(r)\big]\big|&\lesssim \| g\|_{L^\infty}\big|\check{\chi}_{c}\big|*\big[\frac{1}{(1+|t-\frac{\nu}{4}|)^{\frac{n-1}{2}}}\frac{1}{(1+\big|r - |t-\frac{\nu}{4}|\big|)^{\frac12}}\big]\\
 &\lesssim   \| g\|_{L^\infty}\mu^n\cdot \mu^{-(n-\frac12)}\cdot\frac{1}{(1+|t-\frac{\nu}{4}|)^{\frac{n-1}{2}}}
\end{align*}
from which we deduce the bound 
$$\sup_{c; I(c) =I} \|\check{\chi}_{c}*\big[g(\omega)r^{\frac{2-n}{2}}\psi^k_{t-\frac{\nu}{4}}(r)\big]\|_{L^\infty}|\lesssim   \| g\|_{L^\infty} \mu^{\frac12}\cdot\frac{1}{(1+|t-\frac{\nu}{4}|)^{\frac{n-1}{2}}}
$$
The trivial $L^2$ bound 
$$\bigl(\sum_{c; I(c) =I} \|\check{\chi}_{c}*\big[g(\omega)r^{\frac{2-n}{2}}\psi^k_{t-\frac{\nu}{4}}(r)\big]\|_{L^2}^2\bigr)^{1/2}\lesssim   \| g\|_{L^2}
$$
and interpolation gives for $q = \frac{2(n-1)}{n-2}+$
\begin{equation*}
\bigl(\sum_{c; I(c) =I} \big\|\check{\chi}_{c}*\big[g(\omega)r^{\frac{2-n}{2}}\psi^k_{t-\frac{\nu}{4}}(r)\big]\big\|_{L^q}^q\bigr)^{1/q}\lesssim   \| g\|_{L^q( \mathbb{S}^{n-1})}\mu^{\frac{1} {2(n-1)}-}\cdot\frac{1}{(1+|t-\frac{\nu}{4}|)^{\frac{1}{2}+}}.
\end{equation*}
Finally, H\"older inequality and the uniform bound on the $L^q$ norm of $b_{k,l}$ from Proposition~\ref{prop:randomStrich1} gives 
\begin{equation}\label{linfty}
\begin{aligned}
&\big(\sum_c\big\|\big(\sum_{k,l, \nu}\big|c_{k,l}^{(I(c)), \nu}\big|^2 \big|\check{\chi}_{c}*\big[b_{k,l}(\theta)r^{\frac{2-n}{2}}\psi^k_{t-\frac{\nu}{4}}(r)\big]\big|^2\big)^{\frac12}\big\|_{L_x^q}^2\big)^{\frac12}\\
&=\big( \sum_I \sum_{c; I(c) =I}  \sum_{k,l, \nu}\| c_{k,l}^{(I, \nu}\check{\chi}_{c}*\big[b_{k,l}(\theta)r^{\frac{2-n}{2}}\psi^k_{t-\frac{\nu}{4}}(r)\big]\big)\big\|^2_{L_x^q}\big)^{\frac12}\\
&\lesssim \Big( \sum_{k,l, \nu, I} \bigl(\sum_{c; I(c) =I} \big\|c_{k,l}^{I, \nu} \check{\chi}_{c}*\big[b_{k,l}(\theta)r^{\frac{2-n}{2}}\psi^k_{t-\frac{\nu}{4}}(r)\big]\big)\big\|^q_{L_x^q}\big)^{\frac2q} \mu^{-1} \Bigr)^{1/2} \\
&\lesssim\mu^{-\frac{n-2}{2(n-1)}-}\big(\sum_{k,l, \nu, I}  |c_{k,l}^{(I),\nu}|^2 \frac{1}{(1+|t-\frac{\nu}{4}|)^{1+}}\big)^{\frac12},
\end{aligned}
\end{equation}
In total, we infer for such $q$ the bound 
\begin{align*}
&\big\|\big(\sum_c\big\|\big(\sum_{k,l, \nu}\big|c_{k,l}^{(I(c)), \nu}\big|^2 \big|\check{\chi}_{c}*\big[b_{k,l}(\theta)r^{\frac{2-n}{2}}\psi^k_{t-\frac{\nu}{4}}(r)\big]\big|^2\big)^{\frac12}\big\|_{L_x^q}^2\big)^{\frac12}\big\|_{L_t^2}\\
&\lesssim \mu^{-\frac{n-2}{2(n-1)}-}\big(\sum_{k,l, \nu, I}  |c_{k,l}^{(I),\nu}|^2\big)^{\frac12},
\end{align*}
and so 
\begin{align*}
\big\|\big\|\{\big\|P_c u^{(\tilde{\omega},\tilde{\omega}_1)}\big\|_{L_t^2 L_x^q}\}\big\|_{l^2_c}\big\|_{L^s_{\tilde{\omega},\tilde{\omega}_1}}\leq D \sqrt{s}\mu^{-\frac{n-2}{2(n-1)}-}\big\|f\big\|_{L_x^2}
\end{align*}
The proposition is a consequence of this via Lemma 4.5 in \cite{Tzve}. 
\end{proof}

\subsection{Comparison to the Klainerman-Tataru improved Strichartz estimate}

Recall from \cite{KlTat} that in dimension $n\geq 4$, we have the following bound for free waves $u$ supported at frequency $\sim 1$
\begin{equation}\label{eq:KlTat}
\big(\sum_{c\in \mathcal{C}}\big\| P_c u\big\|_{L_t^2 L_x^p(\R^{n+1})}^2\big)^{\frac12}\lesssim \mu^{\frac12 - \frac{1}{p}}\big\|u[0]\big\|_{L_x^2},
\end{equation}
provided $(2,p)$ is (standard) Strichartz admissible. Using the endpoint exponent $p = \frac{2(n-1)}{n-3}$, and using Bernstein's inequality, we can improve this for $p = \infty$ to 
\begin{align*}
\big(\sum_{c\in \mathcal{C}}\big\| P_c u\big\|_{L_t^2 L_x^\infty(\R^{n+1})}^2\big)^{\frac12}&\lesssim \mu^{\frac12 - \frac{n-3}{2(n-1)} + \frac{n(n-3)}{2(n-1)}}\big\|u[0]\big\|_{L_x^2}\\
&\lesssim \mu^{\frac{n-2}{2}}\big\|u[0]\big\|_{L_x^2}.\\
\end{align*}
On the other hand, assuming for $q = \frac{2(n-1)}{n-2}+$ the bound 
\[
\big(\sum_{c\in \mathcal{C}}\big\| P_c u\big\|_{L_t^2 L_x^p(\R^{n+1})}^2\big)^{\frac12}\lesssim \mu^{-\frac{n-2}{2(n-1)}-}\big\| u[0]\big\|_{L_x^2}
\]
leads via Bernstein's inequality to the bound 
\begin{align*}
\big(\sum_{c\in \mathcal{C}}\big\| P_c u\big\|_{L_t^2 L_x^\infty(\R^{n+1})}^2\big)^{\frac12}&\lesssim \mu^{-\frac{n-2}{2(n-1)}-}\mu^{\frac{n(n-2)}{2(n-1)}}\big\|u[0]\big\|_{L_x^2}\\
& = \mu^{\frac{n-2}{2}-}\big\|u[0]\big\|_{L_x^2},\\
\end{align*}
which is essentially compatible with the Klainerman-Tataru bound. However, the range of exponents $q$ in Proposition~\ref{prop:randomStrich2} is of course much larger than the one in \cite{KlTat}. 

\section{Small data global existence for the critical nonlinear wave equation in $n = 3$ dimensions with supercritical data}

\subsection{Some notational conventions}

In the sequel, we shall denote dyadic frequencies by $N = 2^k$, $k\in \Z$, and the associated standard Littlewood-Paley multipliers by $P_N$ or also $P_k$. For each $l>0$, we pick a uniformly finitely overlapping cover $K_l$ of $S^2$ by caps $\kappa$ of diameter $\sim 2^{-l}$, and denote the Fourier localizers which smoothly localise to frequency $\sim 2^k$ and angular sector $\kappa$ by $P_{k,\kappa}$. If $u$ is a function of $(t, x)$, we denote its restriction to $\tau><0$ (Fourier variables) by $Q^{\pm}u$ or $u^{\pm}$. We denote by $\big||\tau| - |\xi|\big|$ the {\it{modulation}}, and by $Q_j$ the multiplier which smoothly localises to modulation $\sim 2^j$. To define the spaces in the next section, we shall refer to null-frames $(t_{\omega}, x_{\omega^{\perp}})$, $\omega\in S^2$, which refer to $\frac{1}{\sqrt{2}}(t + \omega\cdot x)$ as well as $x - t_{\omega}\cdot \frac{1}{\sqrt{2}}\cdot(1,\omega)$. 
We shall frequently resort to {\it{Bernstein's inequality}}: for us this means the fact that for $p<q$ and $f\in L^p(\R^n)$ with Fourier support contained in a rectangular box $R$ we have 
\[
\big\|f\big\|_{L^q(\R^n)}\lesssim |R|^{\frac{1}{p} - \frac{1}{q}}\cdot \big\|f\big\|_{L^p(\R^n)}. 
\]

\subsection{Smoothness gains via Wiener randomisation}
Here we combine the preceding considerations with the Wiener randomisation introduced by Luhrmann-Mendelson in \cite{Luhr1}. We shall henceforth work in $n = 3$ spatial dimensions. Consider a datum $f(\rho\omega)$.  Write this as a sum of frequency localised pieces: 
\[
f = \sum_{N\geq 1} P_Nf =: f_{<0}+ \sum_N f^{(N)}.
\]
where $N$ ranges over dyadic numbers and $P_N$ is the standard Littlewood-Paley projector. To simplify things a bit, we shall assume $f_{<0} = 0$ in the sequel. We  randomise each component $f^{(N)}$ as in the last subsection but one, i. e. introduce $f^{(N), (\tilde{\omega}^{(N)}, \tilde{\omega}_1^{(N)})}$, where the superscripts in $\tilde{\omega}^{(N)}, \tilde{\omega}_1^{(N)}$ indicate that we randomise these pieces independently, of course. Introducing the corresponding propagators 
\[
u^{(N), (\tilde{\omega}^{(N)}, \tilde{\omega}^{(N)}_1)}(t, x) = \big(e^{-it\sqrt{-\triangle}}f^{(N), (\tilde{\omega}, \tilde{\omega}^{(N)}_1)}\big)(x), 
\]
and letting $\mathcal{C}$ be a finitely overlapping covering the frequency region $\rho \sim N$ by cubes if diameter $\mu\sim 1$, we have the following re-scaled version of the inequality of Proposition~\ref{prop:randomStrich2}, keeping in mind that we set $n = 3$: for any $q>4$, 
\begin{equation}\label{eq:keyinequality1}
\mathbf{P}\big(\{\big(\sum_{c\in\mathcal{C}}\big\|P_c u^{(N), (\tilde{\omega}^{(N)}, \tilde{\omega}_1^{(N)})}\big\|_{L_t^2 L_x^q(\R^{3+1})}^2\big)^{\frac12}>\lambda N^{\frac{1}{4}+}\}\big)\leq De^{-d\frac{\lambda^2}{\|f^{(N)}\|_{H^{\frac14+}(\R^3)}^2}}
\end{equation}
Alternatively, we get 
\begin{equation}\label{eq:keyinequality2}
\mathbf{P}\big(\{\big(\sum_{c\in\mathcal{C}}\big\|P_c u^{(N), (\tilde{\omega}^{(N)}, \tilde{\omega}_1^{(N)})}\big\|_{L_t^2 L_x^q(\R^{3+1})}^2\big)^{\frac12}>\lambda \}\big)\leq De^{-d\frac{\lambda^2}{\|f^{(N)}\|_{H^{\frac12+}(\R^3)}^2}}
\end{equation}

Assume now that 
\[
\big\|f\big\|_{H^{\frac12+}(\R^3)}\sim \big(\sum_{N\geq 1}\big\|f^{(N)}\big\|_{H^{\frac12+}(\R^3)}^2\big)^{\frac12}<\epsilon_*\ll 1.
\]
Then letting $\prod_{N\geq 1}(\Omega^{(N)}\times \Omega_1^{(N)})$ be the corresponding product probability space, we have 
\begin{align*}
&\mathbf{P}\big(\{\exists N\geq 1; \big(\sum_{c\in\mathcal{C}}\big\|P_c u^{(N), (\tilde{\omega}^{(N)}, \tilde{\omega}_1^{(N)})}\big\|_{L_t^2 L_x^q(\R^{3+1})}^2\big)^{\frac12}>\sqrt{\epsilon_*}\langle\log N\rangle\}\big)
\\
&\leq \sum_{N\geq 1} De^{-d\frac{\langle\log N\rangle^2\epsilon_*}{\|f^{(N)}\|_{H^{\frac12+}(\R^3)}^2}}\leq De^{-d\epsilon_*^{-1}}
\end{align*}
Replacing $\epsilon_*$ by $\langle\log N\rangle^6\epsilon_*$ and incorporating the correction into $\|f^{(N)}\|_{H^{\frac12+}(\R^4)}$, we see that  up to a data set of exponentially vanishing size $De^{-d\epsilon_*^{-1}}$, we may assume that for each dyadic $N$ we have the bound 
\begin{equation}\label{eq:keybound1}
\big(\sum_{c\in\mathcal{C}}\big\|P_c u^{(N), (\tilde{\omega}^{(N)}, \tilde{\omega}_1^{(N)})}\big\|_{L_t^2 L_x^q(\R^{3+1})}^2\big)^{\frac12}<\frac{\sqrt{\epsilon_*}}{\langle\log N\rangle^2}
\end{equation}
In order to take advantage of this bound, we now effect a third, final randomisation, this time at the scale of cubes of size $\sim 1$ covering frequency space. This is in effect exactly the procedure in \cite{Luhr1}. Thus for the usual random variable $h_c(\tilde{\omega}_3)$, where $c\in \mathcal{C}$ ranges over a collection of finitely overlapping cubes of diameter $\sim 1$ and $P_c$ the corresponding Fourier localizer, we consider 
\[
f^{(\tilde{\omega}^{*}, \tilde{\omega}_1^{*}, \tilde{\omega}_3)}: = \sum_N h_c(\tilde{\omega}_3) P_cf^{(N), (\tilde{\omega}^{(N)}, \tilde{\omega}_1^{(N)})}
\]
Call the corresponding propagator 
\[
u^{(\tilde{\omega}^{*}, \tilde{\omega}_1^{*}, \tilde{\omega}_3)}: = \big(e^{-it\sqrt{-\triangle}}f^{(\tilde{\omega}^{*}, \tilde{\omega}_1^{*}, \tilde{\omega}_3)}\big)(x). 
\]
If $\tilde{\omega}_3$ is defined on probability space $\Omega_3$, with probability measure $\mathbf{P}_3$, then a combination of Bernstein's inequality with \eqref{eq:keybound1} furnishes the following 
\begin{lem}\label{lem:twoinfty} Assume that $\big\|f\big\|_{H^{\frac{1}{2}+}(\R^3)}<\epsilon_*$ and that $(\tilde{\omega}^{*}, \tilde{\omega}_1^{*})$ avoids an exceptional set of measure $\lesssim e^{-\frac{d}{\epsilon_*}}$. Then we have for any $M\in (4,\infty)$
\[
\mathbf{P}_3\big(\{\big(\sum_{N\geq 1}\langle\log N\rangle^2\big\| P_N u^{(\tilde{\omega}^{*}, \tilde{\omega}_1^{*}, \tilde{\omega}_3)}\big\|_{L_t^2 L_x^M(\R^{3+1})}^2\big)^{\frac12}>\lambda\}\big)< G e^{-g\frac{\lambda^2}{\epsilon_*}}
\]
In particular, up to a set of parameters $\tilde{\omega}_3$ of size $<e^{-\frac{g}{\sqrt{\epsilon_*}}}$, we have 
\begin{align*}
\big(\sum_{N\geq 1}\langle\log N\rangle^2\big\| P_N u^{(\tilde{\omega}^{*}, \tilde{\omega}_1^{*}, \tilde{\omega}_3)}\big\|_{L_t^2 L_x^M(\R^{3+1})}^2\big)^{\frac12}<\epsilon_*^{\frac14},
\end{align*}
which implies 
\begin{align*}
\sum_{N\geq 1}\big\| P_N u^{(\tilde{\omega}^{*}, \tilde{\omega}_1^{*}, \tilde{\omega}_3)}\big\|_{L_t^2 L_x^M(\R^{3+1})}\lesssim \epsilon_*^{\frac14},
\end{align*}
\end{lem}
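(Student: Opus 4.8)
The plan is to combine the frequency-localized bound \eqref{eq:keybound1}, valid off an exceptional set in the first two randomizations, with the Wiener randomization in the variable $\tilde\omega_3$ and a standard large-deviation estimate. First I would fix a datum $f$ with $\|f\|_{H^{\frac12+}}<\epsilon_*$ and a choice of $(\tilde\omega^*,\tilde\omega_1^*)$ outside the exceptional set of measure $\lesssim e^{-d/\epsilon_*}$ produced just above, so that \eqref{eq:keybound1} holds for every dyadic $N$. Then I would apply Khintchine's inequality (Lemma 4.5 of \cite{Tzve}, or the Minkowski-plus-Gaussian-tail argument used throughout this section) in the $\tilde\omega_3$ variable: for $s\geq M$,
\[
\big\|P_N u^{(\tilde\omega^*,\tilde\omega_1^*,\tilde\omega_3)}\big\|_{L^s_{\tilde\omega_3}L_t^2 L_x^M}\lesssim \sqrt{s}\,\big\|\big(\sum_{c\in\mathcal C}|P_c u^{(N),(\tilde\omega^{(N)},\tilde\omega_1^{(N)})}|^2\big)^{\frac12}\big\|_{L_t^2 L_x^M}.
\]
To pass from the square-function on the right to the $\ell^2_c$-sum of $L^2_tL^M_x$ norms appearing in \eqref{eq:keybound1}, I would use Minkowski's inequality (moving the $L_t^2 L_x^M$ norm inside the $\ell^2_c$ sum, which is legitimate since $M\geq 2$), getting a bound by $\big(\sum_c \|P_c u^{(N),\ldots}\|_{L_t^2 L_x^M}^2\big)^{1/2}$. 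Crucially the cubes $c$ here have diameter $\sim N$ (i.e. $\mu\sim 1$ after rescaling), so Bernstein's inequality upgrades the exponent $q=\frac{2(n-1)}{n-2}+ = 4+$ in \eqref{eq:keybound1} to any $M\in(4,\infty)$ at the cost of a factor $N^{3(\frac1q-\frac1M)}$ per cube; since this is uniform in $c$ and we only lose a bounded power of $N$, it can be absorbed into the $\langle\log N\rangle^{-2}$ gain after adjusting the power of $\epsilon_*$ (exactly as was done above when $\epsilon_*$ was replaced by $\langle\log N\rangle^6\epsilon_*$).

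Next I would take the weighted $\ell^2_N$-sum. Since the $u^{(N),\ldots}$ are randomized with independent families $\tilde\omega_3$-variables (the $h_c(\tilde\omega_3)$ act on disjoint frequency cubes, hence different $N$), the $L^s_{\tilde\omega_3}$ norm of the full square-function controls the $\ell^2_N$-sum of the individual $L^s_{\tilde\omega_3}$ norms via Minkowski again; combining with \eqref{eq:keybound1} gives
\[
\big\|\big(\sum_{N\geq1}\langle\log N\rangle^2\|P_N u^{(\tilde\omega^*,\tilde\omega_1^*,\tilde\omega_3)}\|_{L_t^2 L_x^M}^2\big)^{\frac12}\big\|_{L^s_{\tilde\omega_3}}\lesssim \sqrt{s}\,\big(\sum_{N\geq 1}\langle\log N\rangle^{2}\cdot\frac{\epsilon_*}{\langle\log N\rangle^{4}}\big)^{\frac12}\lesssim \sqrt{s\,\epsilon_*},
\]
the series in $N$ converging because $\sum_N \langle\log N\rangle^{-2}<\infty$. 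A Chebyshev/optimization-in-$s$ argument (again packaged in Lemma 4.5 of \cite{Tzve}) then converts this moment bound into the exponential-tail estimate $\mathbf P_3(\{\cdots>\lambda\})<Ge^{-g\lambda^2/\epsilon_*}$, which is the first claim. Taking $\lambda=\epsilon_*^{1/4}$ makes the exceptional set $\lesssim e^{-g\epsilon_*^{-1/2}}$, giving the second displayed bound; and the third, $\sum_N\|P_N u\|_{L_t^2 L_x^M}\lesssim\epsilon_*^{1/4}$, follows from the second by Cauchy–Schwarz in $N$ using $\sum_N\langle\log N\rangle^{-2}<\infty$ (this costs only the convergent factor $(\sum_N\langle\log N\rangle^{-2})^{1/2}$).

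The main obstacle, and the step requiring the most care, is the bookkeeping around the Bernstein step when passing from $q=4+$ to a general $M\in(4,\infty)$: one must check that the loss $N^{3(1/q-1/M)}$ is a fixed (sub-polynomial after choosing $q$ close enough to $M$, or in any case bounded) power that can genuinely be absorbed into the logarithmic weights by the same "$\epsilon_*\mapsto\langle\log N\rangle^6\epsilon_*$" trick without destroying the smallness or the convergence of the $N$-series — and to confirm that this rescaling is consistent with the hypothesis $\|f\|_{H^{1/2+}}<\epsilon_*$ and with the fact that \eqref{eq:keybound1} already holds for \emph{every} $N$ on the same full-measure set of $(\tilde\omega^*,\tilde\omega_1^*)$. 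Everything else — the three applications of Minkowski, Khintchine in $\tilde\omega_3$, and the final Cauchy–Schwarz in $N$ — is routine.
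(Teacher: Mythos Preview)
Your overall architecture is the same as the paper's --- Minkowski to push $L^s_{\tilde\omega_3}$ inside, Khintchine in the Wiener variables $h_c(\tilde\omega_3)$ to produce the $\sqrt{s}$ and the $\ell^2_c$ square sum, then \eqref{eq:keybound1}, then Lemma~4.5 of \cite{Tzve}, and finally Cauchy--Schwarz in $N$. That part is fine.

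The gap is in your Bernstein step. You write that ``the cubes $c$ here have diameter $\sim N$'' and hence that passing from $L_x^q$ ($q=4+$) to $L_x^M$ costs a factor $N^{3(\frac1q-\frac1M)}$. This is a misreading of the setup: the cubes $c\in\mathcal C$ covering the annulus $\rho\sim N$ have diameter $\sim 1$, not $\sim N$ (this is the whole point of the Wiener randomisation, and is stated explicitly just before \eqref{eq:keyinequality1} and again in the definition of $f^{(\tilde\omega^*,\tilde\omega_1^*,\tilde\omega_3)}$). Consequently $|c|\sim 1$ and Bernstein gives $\|P_c g\|_{L_x^M}\lesssim \|P_c g\|_{L_x^q}$ with a constant depending only on $q,M$, uniformly in $N$ and $c$. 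So your ``main obstacle'' evaporates: there is no $N$-dependent loss to absorb at all, and the paper's proof simply writes
\[
\|P_c u^{(N),(\tilde\omega^{(N)},\tilde\omega_1^{(N)})}\|_{L_t^2L_x^M}\lesssim \|P_c u^{(N),(\tilde\omega^{(N)},\tilde\omega_1^{(N)})}\|_{L_t^2L_x^q}
\]
and invokes \eqref{eq:keybound1} directly.

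It is worth noting that your proposed fix for the (nonexistent) loss would not have worked anyway: a genuine polynomial loss $N^{\delta}$, $\delta>0$, cannot be absorbed into the logarithmic weight $\langle\log N\rangle^{-2}$ by the ``$\epsilon_*\mapsto\langle\log N\rangle^6\epsilon_*$'' device --- that trick only compensates logarithmic factors. And choosing $q$ close to $M$ to make the loss $N^{0+}$ still leaves a polynomial, which still diverges against $\langle\log N\rangle^{-2}$. So had the cubes really been of size $\sim N$, the argument would fail. Once you correct the cube size, your proof is exactly the paper's.
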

\begin{proof}
We have for any $s\geq M$ the bound 
\begin{align*}
&\big\|\big(\sum_{N\geq 1}\langle\log N\rangle^2\big\| P_N u^{(\tilde{\omega}^{*}, \tilde{\omega}_1^{*}, \tilde{\omega}_3)}\big\|_{L_t^2 L_x^M(\R^{3+1})}^2\big)^{\frac12}\big\|_{L_{\tilde{\omega}_3}^s}\\&\leq G\sqrt{s}\big(\sum_{N\geq 1}\sum_{c\in\mathcal{C}}\langle\log N\rangle^2\big\| P_N P_c u^{(\tilde{\omega}^{*}, \tilde{\omega}_1^{*})}\big\|_{L_t^2 L_x^M(\R^{3+1})}^2\big)^{\frac12}\\
&\lesssim G\sqrt{s}\big(\sum_{N\geq 1}\sum_{c\in\mathcal{C}}\langle\log N\rangle^2\big\|P_c u^{(N), (\tilde{\omega}^{(N)}, \tilde{\omega}_1^{(N)})}\big\|_{L_t^2 L_x^q(\R^{3+1})}^2\big)^{\frac12}
\end{align*}
for any $4<q\leq M$. The assertion then follows from \eqref{eq:keybound1} and lemma 4.5 in \cite{Tzve}. 
\end{proof}
For later reference, we shall want to adapt this result to data of varying degrees of smoothness. We have 
\begin{lem}\label{lem:twoinftyrough} Assume that $\big\|f\big\|_{H^{s}(\R^3)}<\epsilon_*$, $s\in \R$, and that $(\tilde{\omega}^{*}, \tilde{\omega}_1^{*})$ avoids an exceptional set of measure $\lesssim e^{-\frac{d}{\epsilon_*}}$. Then we have for any $M\in (4,\infty)$
\[
\mathbf{P}_3\big(\{\big(\sum_{N\geq 1}\langle\log N\rangle^2 N^{2s-1-}\big\| P_N u^{(\tilde{\omega}^{*}, \tilde{\omega}_1^{*}, \tilde{\omega}_3)}\big\|_{L_t^2 L_x^M(\R^{3+1})}^2\big)^{\frac12}>\lambda\}\big)< G e^{-g\frac{\lambda^2}{\epsilon_*}}
\]
In particular, up to a set of parameters $\tilde{\omega}_3$ of size $<e^{-\frac{g}{\sqrt{\epsilon_*}}}$, we have 
\begin{align*}
\big(\sum_{N\geq 1}\langle\log N\rangle^2 N^{2s-1-}\big\| P_N u^{(\tilde{\omega}^{*}, \tilde{\omega}_1^{*}, \tilde{\omega}_3)}\big\|_{L_t^2 L_x^M(\R^{3+1})}^2\big)^{\frac12}<\epsilon_*^{\frac14}.
\end{align*}
By Bernstein's inequality, this implies (choosing $M$ sufficiently large) that 
\begin{align*}
\big(\sum_{N\geq 1}\langle\log N\rangle^2 N^{2s-1-}\big\| P_N u^{(\tilde{\omega}^{*}, \tilde{\omega}_1^{*}, \tilde{\omega}_3)}\big\|_{L_t^2 L_x^\infty(\R^{3+1})}^2\big)^{\frac12}<\epsilon_*^{\frac14}.
\end{align*}
\end{lem}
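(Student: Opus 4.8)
The plan is to re-run the three successive randomisations that produced Lemma~\ref{lem:twoinfty}, carrying along one extra power of the dyadic frequency $N$ to account for measuring the datum in $H^s$ rather than in $H^{\frac12+}$. The only input that genuinely changes is the re-scaled estimate behind \eqref{eq:keyinequality2}: since $f^{(N)}$ is Fourier supported where $|\xi|\sim N$, one has $\|f^{(N)}\|_{H^{\frac12+}}\sim N^{\frac12-s+}\|f^{(N)}\|_{H^s}$, so replacing $\lambda$ by $\lambda N^{\frac12-s+}$ in \eqref{eq:keyinequality2} gives
\[
\mathbf P\big(\{\big(\sum_{c\in\mathcal C}\|P_c u^{(N),(\tilde\omega^{(N)},\tilde\omega_1^{(N)})}\|_{L_t^2L_x^q}^2\big)^{\frac12}>\lambda\,N^{\frac12-s+}\}\big)\le D\,e^{-d\lambda^2/\|f^{(N)}\|_{H^s}^2},\qquad q=4+.
\]
First I would feed this into exactly the argument leading to \eqref{eq:keybound1}: taking the threshold proportional to $\sqrt{\epsilon_*}\langle\log N\rangle\,N^{\frac12-s+}$, summing the exceptional sets over the dyadic $N\geq1$ using $\sum_N\|f^{(N)}\|_{H^s}^2<\epsilon_*^2$, and absorbing the logarithmic corrections exactly as there, I obtain that off an exceptional $(\tilde\omega^{*},\tilde\omega_1^{*})$-set of measure $\lesssim e^{-d/\epsilon_*}$ one has, for every dyadic $N$, the weighted analogue of \eqref{eq:keybound1},
\[
\big(\sum_{c\in\mathcal C}\|P_c u^{(N),(\tilde\omega^{(N)},\tilde\omega_1^{(N)})}\|_{L_t^2L_x^q}^2\big)^{\frac12}\lesssim \sqrt{\epsilon_*}\,\langle\log N\rangle\,N^{\frac12-s+}.
\]

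Next I would run the final Wiener randomisation $\tilde\omega_3$ as in the proof of Lemma~\ref{lem:twoinfty}. For $\sigma\ge M$, Minkowski's inequality, Lemma~3.1 of \cite{BurqTzve}, and Bernstein's inequality on the unit-diameter cubes $c$ (legitimately trading $L_x^M$ for $L_x^q$ since $|c|\sim1$) give
\begin{align*}
&\big\|\big(\sum_{N\geq1}\langle\log N\rangle^2N^{2s-1-}\|P_Nu^{(\tilde\omega^{*},\tilde\omega_1^{*},\tilde\omega_3)}\|_{L_t^2L_x^M}^2\big)^{\frac12}\big\|_{L^\sigma_{\tilde\omega_3}}\\
&\qquad\lesssim G\sqrt\sigma\Big(\sum_{N\geq1}\sum_{c\in\mathcal C}\langle\log N\rangle^2N^{2s-1-}\|P_cu^{(N),(\tilde\omega^{(N)},\tilde\omega_1^{(N)})}\|_{L_t^2L_x^q}^2\Big)^{\frac12}.
\end{align*}
Inserting the per-$N$ bound from the previous display, the $N$-th summand is $\lesssim\epsilon_*\langle\log N\rangle^4N^{-\kappa}$ with some $\kappa>0$, provided the (fixed, arbitrarily small) loss ``$-$'' built into the weight $N^{2s-1-}$ is chosen strictly larger than twice the (fixed, arbitrarily small) loss ``$+$'' produced by the choice $q=4+$; the $N$-sum is then an essentially geometric convergent series and equals $\lesssim\epsilon_*$. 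Hence the $L^\sigma_{\tilde\omega_3}$-norm is $\lesssim G\sqrt\sigma\sqrt{\epsilon_*}$, and Lemma~4.5 of \cite{Tzve} upgrades this to the asserted sub-Gaussian probability bound; evaluated at $\lambda=\epsilon_*^{1/4}$ it places the claimed inequality outside a $\tilde\omega_3$-set of measure $<e^{-g\epsilon_*^{-1/2}}$. Finally, the $L_x^\infty$ statement follows from the $L_x^M$ one by Bernstein, $\|P_Ng\|_{L_x^\infty}\lesssim N^{3/M}\|P_Ng\|_{L_x^M}$: taking $M$ so large that $6/M$ does not exhaust the remaining slack in the exponent $2s-1-$ keeps the weighted square sum $\lesssim\epsilon_*$.

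The one point requiring genuine care is the order in which the small parameters are fixed: one first pins down $q=4+$ (hence the loss in Proposition~\ref{prop:randomStrich2}), then chooses the ``$-$'' in $N^{2s-1-}$ to dominate it, and only afterwards takes $M$ large; with this ordering every $N$-summation that appears converges geometrically, and beyond this exponent bookkeeping the proof is verbatim that of Lemma~\ref{lem:twoinfty}. I do not expect any substantive obstacle, since the analytic content — the pointwise/energy interpolation, the good-frame $L^q$ bounds of Theorem~\ref{thm:almost bounded}, and the three square-function estimates of Propositions~\ref{prop:randomStrich1} and~\ref{prop:randomStrich2} — is already in place.
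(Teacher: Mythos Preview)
Your proposal is correct and is precisely the adaptation the paper has in mind: the paper gives no separate proof of Lemma~\ref{lem:twoinftyrough}, merely signalling that one ``adapts'' Lemma~\ref{lem:twoinfty} to general $H^s$, and your argument---rescaling \eqref{eq:keyinequality2} via $\|f^{(N)}\|_{H^{1/2+}}\sim N^{1/2-s+}\|f^{(N)}\|_{H^s}$, re-deriving the weighted analogue of \eqref{eq:keybound1}, and then running the $\tilde\omega_3$-randomisation and Bernstein exactly as before---carries this out in full. Your remark about fixing the small exponents in the right order (first $q=4+$, then the ``$-$'' in $N^{2s-1-}$, then $M$) is the only point requiring care, and you have it right.
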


For later reference, we shall also need randomised bounds for the $L_{t,x}^\infty$-norm which beat the scaling. These can be obtained easily by using an $L_t^{M}L_x^{2+}$-Strichartz bound and invoking Bernstein's inequality to pass to an $L_{t,x}^\infty$-bound: 
\begin{lem}\label{lem:inftyinfty} Let $f$ be as in the preceding lemma. For fixed $(\tilde{\omega}^{*}, \tilde{\omega}_1^{*})$, there is a set of parameters $\tilde{\omega}_3$ of size $>1-e^{-\frac{g}{\sqrt{\epsilon_*}}}$, such that 
\begin{align*}
\big(\sum_{N\geq 1}\langle\log N\rangle^2 N^{2s-}\big\| P_N u^{(\tilde{\omega}^{*}, \tilde{\omega}_1^{*}, \tilde{\omega}_3)}\big\|_{L_{t,x}^\infty(\R^{3+1})}^2\big)^{\frac12}<\epsilon_*^{\frac14}.
\end{align*}
\end{lem}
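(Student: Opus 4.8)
The key point is a \emph{pointwise-in-$(t,x)$} randomised estimate at the energy level, which is exactly what lets one ``beat the scaling''. Write $u^{\natural}:=u^{(\tilde{\omega}^{*},\tilde{\omega}_1^{*},\tilde{\omega}_3)}$. The final ($\tilde{\omega}_3$) randomisation multiplies, independently and by subgaussian variables $h_c(\tilde{\omega}_3)$, the pieces $P_c$ of the wave over the unit cubes $c$ covering the annulus $\rho\sim N$, so that $P_N u^{\natural}(t,x)=\sum_c h_c(\tilde{\omega}_3)\,(P_c u^{(N),(\tilde{\omega}^{*},\tilde{\omega}_1^{*})})(t,x)$. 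By the Khintchine-type inequality of \cite[Lemma 3.1]{BurqTzve}, for every fixed $(t,x)$ and $s\geq 1$,
\[
\big\|P_N u^{\natural}(t,x)\big\|_{L^s_{\tilde{\omega}_3}}\lesssim \sqrt{s}\,\Big(\sum_c\big|\big(P_c u^{(N),(\tilde{\omega}^{*},\tilde{\omega}_1^{*})}\big)(t,x)\big|^{2}\Big)^{1/2},
\]
and since each cube has unit measure, Cauchy--Schwarz in the Fourier variable bounds the inner quantity, pointwise in $(t,x)$, by $(\sum_c\|P_c f^{(N),(\tilde{\omega}^{*},\tilde{\omega}_1^{*})}\|_{L^2}^2)^{1/2}=\|f^{(N),(\tilde{\omega}^{*},\tilde{\omega}_1^{*})}\|_{L^2}\lesssim\|f^{(N)}\|_{L^2(\R^3)}$, the last step after enlarging the exceptional $(\tilde{\omega}^{*},\tilde{\omega}_1^{*})$-set by a further exponentially small amount. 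Hence $\|P_N u^{\natural}(t,x)\|_{L^s_{\tilde{\omega}_3}}\lesssim\sqrt{s}\,\|f^{(N)}\|_{L^2}$ for \emph{every} $(t,x)$; recalling $\|f^{(N)}\|_{L^2}\sim N^{-s}\|f^{(N)}\|_{H^s}$, this is precisely an energy-level bound for the value of the randomised wave at any fixed point.

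I would then upgrade this to an $L^\infty_{t,x}$ bound by discretisation, using the $L_t^M L_x^{2+}$-Strichartz bound to restrict to a polynomially sized net. Since $P_N u^{\natural}$ has space-time Fourier support in a ball of radius $\sim N$, Bernstein's inequality gives $\|P_N u^{\natural}\|_{L^\infty_{t,x}}\lesssim\sup_{(t_j,x_j)\in\Lambda}|P_N u^{\natural}(t_j,x_j)|$ for a $c_0N^{-1}$-net $\Lambda$ of $\R^{3+1}$. Fix $M$ large with $(M,2+\delta)$ Strichartz admissible in dimension $3$; a H\"older-in-time interpolation between two admissible pairs, together with the dispersive decay of the frequency-$N$ half-wave and finite speed of propagation, yields $\|P_N u^{\natural}\|_{L_t^M L_x^{2+}(\{|t|+|x|>T\})}\lesssim T^{-\kappa}N^{C_2}\|f^{(N)}\|_{L^2}$ for suitable $\kappa,C_2>0$. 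Feeding this through the (now harmless) Bernstein estimate $L_x^{2+}\to L^\infty_x$ shows that the part of $\Lambda$ lying in $\{|t_j|+|x_j|>N^{C_1}\}$ contributes only $\lesssim N^{\frac32+C_2-C_1\kappa}\|f^{(N)}\|_{L^2}$, which is negligible once $C_1$ is chosen large. Thus it suffices to bound the supremum over a sub-net $\Lambda_N$ with $\#\Lambda_N\lesssim N^{C'}$.

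On this finite net the first step gives, for $s\geq 1$,
\[
\Big\|\sup_{(t_j,x_j)\in\Lambda_N}\big|P_N u^{\natural}(t_j,x_j)\big|\Big\|_{L^s_{\tilde{\omega}_3}}\leq\Big(\sum_{(t_j,x_j)\in\Lambda_N}\big\|P_N u^{\natural}(t_j,x_j)\big\|_{L^s_{\tilde{\omega}_3}}^{s}\Big)^{1/s}\lesssim \sqrt{s}\,N^{C'/s}\,\big\|f^{(N)}\big\|_{L^2},
\]
so taking $s\sim\langle\log N\rangle$ makes the factor $N^{C'/s}$ bounded and yields $\|\,\|P_N u^{\natural}\|_{L^\infty_{t,x}}\|_{L^s_{\tilde{\omega}_3}}\lesssim\sqrt{s}\,\langle\log N\rangle^{C_3}\|f^{(N)}\|_{L^2}$ on that range of $s$, while for larger $s$ the trivial bound $\|P_N u^{\natural}\|_{L^\infty_{t,x}}\lesssim(\sum_c|h_c|^2)^{1/2}\|f^{(N)}\|_{L^2}$ (combined with the $L^s$-control of $(\sum_c|h_c|^2)^{1/2}$) suffices. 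From here one follows the proofs of Lemmas~\ref{lem:twoinfty} and~\ref{lem:twoinftyrough} essentially verbatim: Minkowski to pull the $\ell^2_N$-sum outside $L^s_{\tilde{\omega}_3}$, the $\langle\log N\rangle^6$-rescaling device to absorb the logarithmic losses into $\|f^{(N)}\|_{H^s}$, then $\|f^{(N)}\|_{L^2}\sim N^{-s}\|f^{(N)}\|_{H^s}$ and $\sum_N\|f^{(N)}\|_{H^s}^2=\|f\|_{H^s}^2<\epsilon_*^2$ to reach $\|(\sum_N\langle\log N\rangle^2 N^{2s-}\|P_N u^{\natural}\|_{L^\infty_{t,x}}^2)^{1/2}\|_{L^s_{\tilde{\omega}_3}}\lesssim\sqrt{s}\,\epsilon_*$, and finally Lemma 4.5 of \cite{Tzve} to produce the exceptional $\tilde{\omega}_3$-set of measure $<e^{-g/\sqrt{\epsilon_*}}$.

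The main obstacle is the domain restriction in the second paragraph. The large-time tail of the supremum is routine dispersive decay, but the far spatial region carries no decay for $P_Nf$ beyond $f\in L^2(\R^3)$; this is overcome by combining the finite-speed-of-propagation bound for the frequency-localised half-wave with the vanishing of $\|P_Nf\|_{L^2(|x|>R)}$ as $R\to\infty$, so that the effective spatial window can be taken polynomial in $N$, or, for the purposes of the application, simply by working on a fixed compact time interval, where the issue disappears. Everything else is a direct adaptation of the machinery already in place.
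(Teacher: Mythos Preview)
Your route is quite different from the paper's, and it has a real gap. The paper's argument is much shorter: since $P_N u^{\natural}$ is a free wave at spatial frequency $\sim N$, its space-time Fourier support lies in $\{|\tau|\sim N,\ |\xi|\sim N\}$, so the \emph{deterministic} Bernstein inequality (applied for every fixed $\tilde{\omega}_3$) gives
\[
\big\|P_N u^{\natural}\big\|_{L^\infty_{t,x}}\lesssim N^{4/M}\big\|P_N u^{\natural}\big\|_{L^M_{t,x}}
\]
for any large $M$. One then controls $\|P_N u^{\natural}\|_{L^s_{\tilde{\omega}_3}L^M_{t,x}}$ exactly as in the proof of Lemma~\ref{lem:twoinfty}: Minkowski plus Khintchine (for $s\geq M$) produce the square function $(\sum_c\|P_cP_Nu\|_{L^M_{t,x}}^2)^{1/2}$, unit-cube Bernstein converts each $\|P_c\cdot\|_{L^M_x}$ to $\|P_c\cdot\|_{L^{2+}_x}$ at no cost, and the ordinary Strichartz estimate for the admissible pair $(M,2+)$ yields $N^{0+}\|f^{(N)}\|_{L^2}$. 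The total loss $N^{4/M+0+}$ is absorbed by the $N^{2s-}$ weight. No net, no tail estimate, no $s\sim\log N$ trick is needed; the point is simply to apply Bernstein \emph{before} the probabilistic step rather than after.

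Your discretisation argument, by contrast, cannot be closed as written. The asserted tail bound $\|P_N u^{\natural}\|_{L_t^M L_x^{2+}(\{|t|+|x|>T\})}\lesssim T^{-\kappa}N^{C_2}\|f^{(N)}\|_{L^2}$ is false for generic $L^2$ data: if $f^{(N)}$ is concentrated near $|x|=2T$ there is no decay in $T$ at all. The qualitative vanishing $\|P_Nf\|_{L^2(|x|>R)}\to 0$ does furnish, for each fixed $f$, \emph{some} finite spatial window, but its size depends on the profile of $f$ and is not polynomial in $N$ in any uniform sense; this breaks the $N^{C'/s}$ step. Restricting to a compact time interval does not help, since the spatial domain remains $\R^3$. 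Separately, your ``trivial bound'' for large $s$ also fails: the factor $(\sum_c|h_c|^2)^{1/2}$ involves $\sim N^3$ independent unit-cube variables and has $L^s_{\tilde{\omega}_3}$-norm of order at least $N^{3/2}$, not $O(1)$, so it cannot match the $\sqrt{s}\,\langle\log N\rangle^{C_3}$ bound you need in the intermediate range of $s$.
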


\subsection{The model problem: a class a fractional Wave Maps type equations}

Now let $\alpha\in (0,1)$ and introduce the Riesz type operators $R_{\nu}^{(\alpha)}: = \partial_{\nu}|\nabla|^{-\alpha}$, $\nu = 0, 1, 2, 3$ where we put $|\nabla| = \sqrt{-\triangle}$. We also set $\partial^{\nu} = m^{\mu\nu}\partial_{\mu}$, where $m_{\mu\nu}$ denotes the Minkowski metric. Then consider the following class of equations on $\R^{3+1}$: 
\begin{equation}\label{eq:alphaMain}
\Box u = R_{\nu}^{(\alpha)}u R^{\nu,(\alpha)}u.
\end{equation}
Note that for $\alpha = 0$ this attains essentially the form of the Wave Maps equation. This problem scales according to $u(t, x)\longrightarrow \lambda^{2\alpha}u(\lambda t, \lambda x)$, and so the critical Sobolev space is $\dot{H}^{\frac32 - 2\alpha}$. 
Put $s_{\alpha}: = \frac32 - 2\alpha$. Then 
\begin{thm}\label{thm:alpharough} Let $\alpha\in (0,\frac14)$, and let $s>s_{\alpha} - \frac{\alpha}{2}$, and $u[0] = \big(e^{-it\sqrt{-\triangle}}f^{(\tilde{\omega}^{*}, \tilde{\omega}_1^{*}, \tilde{\omega}_3)}\big)[0]$, where 
\[
\big\|f\big\|_{H^{s}(\R^3)}<\epsilon_*,
\]
with $\epsilon_*>0$ sufficiently small. Then for all $(\tilde{\omega}^{*}, \tilde{\omega}_1^{*})$ avoiding an exceptional set of measure $\lesssim e^{-\frac{d}{\epsilon_*}}$, and all 
$\tilde{\omega}_3$ avoiding a set of size $<e^{-\frac{g}{\sqrt{\epsilon_*}}}$, the problem \eqref{eq:alphaMain} with data $u[0]$ admits a global solution which decouples into 
\[
u = e^{-it\sqrt{-\triangle}}f^{(\tilde{\omega}^{*}, \tilde{\omega}_1^{*}, \tilde{\omega}_3)} + v,
\]
where $v\in L_t^\infty \dot{H}^{s_\alpha}$, 
 \end{thm}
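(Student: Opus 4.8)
The plan is to treat the rough, supercritical part of the solution by the probabilistic Strichartz estimates of Section~1 and to solve for a smoother, \emph{critical} remainder by a deterministic contraction in null-frame spaces. Concretely, write $u=u_{\mathrm{lin}}+v$ with $u_{\mathrm{lin}}:=e^{-it\sqrt{-\triangle}}f^{(\tilde{\omega}^{*},\tilde{\omega}_1^{*},\tilde{\omega}_3)}$; since the prescribed data is $u[0]=u_{\mathrm{lin}}[0]$, the remainder solves
\begin{equation*}
\Box v=\mathcal{N}(u_{\mathrm{lin}}+v,u_{\mathrm{lin}}+v)=\mathcal{N}(u_{\mathrm{lin}},u_{\mathrm{lin}})+2\,\mathcal{N}(u_{\mathrm{lin}},v)+\mathcal{N}(v,v),\qquad v[0]=0,
\end{equation*}
where $\mathcal{N}(\phi,\psi):=R_\nu^{(\alpha)}\phi\cdot R^{\nu,(\alpha)}\psi$ (symmetrised); equivalently $v=-\Box_+^{-1}\mathcal{N}(u,u)$ with $\Box_+^{-1}$ the forward Duhamel operator. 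I would solve this for $v$ in a ball of a solution space $S^{s_\alpha}$ built exactly as the null-frame/$X^{s,b}$ spaces of the small-data critical theory of Wave Maps type equations, with the regularity index lowered from $\tfrac32$ to $s_\alpha=\tfrac32-2\alpha$; the lowering is harmless since $\alpha<\tfrac14$ keeps $s_\alpha\in(1,\tfrac32)$ and the extra smoothing $|\na|^{-\alpha}$ only helps. I shall use the two structural facts $S^{s_\alpha}\hookrightarrow L_t^\infty\dot{H}^{s_\alpha}$ and $\|\Box_+^{-1}F\|_{S^{s_\alpha}}\lesssim\|F\|_{\mathcal{N}^{s_\alpha-1}}$, with $\mathcal{N}^{s_\alpha-1}$ the companion ``nonlinearity space''.

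The estimates needed are of three kinds. First, the purely deterministic null-form bound $\|\mathcal{N}(\phi,\psi)\|_{\mathcal{N}^{s_\alpha-1}}\lesssim\|\phi\|_{S^{s_\alpha}}\|\psi\|_{S^{s_\alpha}}$: this is the standard bilinear estimate of the critical theory, whose proof exploits the null structure carried by the Minkowski contraction $m^{\mu\nu}\partial_\mu\partial_\nu$ (needed to tame high$\times$high$\to$low frequency cascades), and for which the operators $|\na|^{-\alpha}$ are only a favorable modification. Second, a \emph{hybrid} estimate with one rough input, controlling $\|\mathcal{N}(u_{\mathrm{lin}},\psi)\|_{\mathcal{N}^{s_\alpha-1}}$ by products of $\|\psi\|_{S^{s_\alpha}}$ with norms of $u_{\mathrm{lin}}$ of the type appearing on the left-hand sides of Lemmas~\ref{lem:twoinftyrough}--\ref{lem:inftyinfty}, such as
\begin{equation*}
\Big(\textstyle\sum_N\langle\log N\rangle^2 N^{2s-1-}\,\|P_Nu_{\mathrm{lin}}\|_{L_t^2L_x^\infty}^2\Big)^{1/2}\quad\text{and}\quad\Big(\textstyle\sum_N\langle\log N\rangle^2 N^{2s-}\,\|P_Nu_{\mathrm{lin}}\|_{L_{t,x}^\infty}^2\Big)^{1/2};
\end{equation*}
and third, the purely random forcing $\|\mathcal{N}(u_{\mathrm{lin}},u_{\mathrm{lin}})\|_{\mathcal{N}^{s_\alpha-1}}\lesssim\big(\text{such norms of }u_{\mathrm{lin}}\big)^2$. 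For the latter two one decomposes dyadically $u_{\mathrm{lin}}=\sum_N P_Nu_{\mathrm{lin}}$ and, in each frequency interaction, replaces the $S^{s_\alpha}$-control on $u_{\mathrm{lin}}$ (unavailable, as $f\notin H^{s_\alpha}$) by those bounds, still using the null structure to handle the high$\times$high$\to$low pieces and keeping track of the $\langle\log N\rangle$ weights so that the dyadic series converge. The point is that, in the Strichartz norms that enter, $u_{\mathrm{lin}}$ behaves like a free wave with roughly $\tfrac12$ more derivatives than $f\in H^s$ alone would give; combined with the $\alpha$ of smoothing in $R_\nu^{(\alpha)}$ this slack is just enough, and the threshold $s>s_\alpha-\tfrac\alpha2$ is exactly the point at which the dyadic bookkeeping closes.

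With these in hand the contraction is routine. By Lemmas~\ref{lem:twoinftyrough}--\ref{lem:inftyinfty}, for $(\tilde{\omega}^{*},\tilde{\omega}_1^{*})$ outside a set of measure $\lesssim e^{-d/\epsilon_*}$ and $\tilde{\omega}_3$ outside a set of size $<e^{-g/\sqrt{\epsilon_*}}$, every probabilistic norm of $u_{\mathrm{lin}}$ entering the above is $\lesssim\epsilon_*^{1/4}$. Hence on the ball $B_R=\{\|v\|_{S^{s_\alpha}}\le R\}$ the map $\Phi(v):=-\Box_+^{-1}\mathcal{N}(u_{\mathrm{lin}}+v,u_{\mathrm{lin}}+v)$ satisfies $\|\Phi(v)\|_{S^{s_\alpha}}\lesssim\epsilon_*^{1/2}+\epsilon_*^{1/4}R+R^2$ and $\|\Phi(v)-\Phi(w)\|_{S^{s_\alpha}}\lesssim(\epsilon_*^{1/4}+R)\|v-w\|_{S^{s_\alpha}}$; choosing $R\sim\epsilon_*^{1/2}$ and $\epsilon_*$ small, $\Phi$ maps $B_R$ into itself and is a contraction there. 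All estimates are global in $t$ (both the probabilistic input and the deterministic null-form theory), so the fixed point $v\in S^{s_\alpha}\hookrightarrow L_t^\infty\dot{H}^{s_\alpha}$ yields a global solution $u=u_{\mathrm{lin}}+v$ of the desired form.

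The main obstacle is the second and third families of estimates above: showing that $\mathcal{N}(u_{\mathrm{lin}},\cdot)$ and $\mathcal{N}(u_{\mathrm{lin}},u_{\mathrm{lin}})$ land in $\mathcal{N}^{s_\alpha-1}$ using only the probabilistic Strichartz norms of $u_{\mathrm{lin}}$. A crude energy estimate ($\mathcal{N}^{s_\alpha-1}\supset L_t^1\dot{H}^{s_\alpha-1}$) combined with Lemmas~\ref{lem:twoinftyrough}--\ref{lem:inftyinfty} only reaches a threshold strictly worse than $s>s_\alpha-\tfrac\alpha2$, so one must genuinely use the null structure of $m^{\mu\nu}\partial_\mu\partial_\nu$ — modulation localisation and null-frame $L^2$ bounds — in the high$\times$high$\to$low regime even when one or both factors are the rough linear evolution, and carry the interplay of the $\langle\log N\rangle$ weights, the $-\alpha$ smoothing, and the $\gtrsim\tfrac12$-derivative randomisation gain through the entire dyadic sum. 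Checking that this closes precisely at $s=s_\alpha-\tfrac\alpha2$, rather than at a larger exponent, is the technical heart of the proof.
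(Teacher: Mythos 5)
Your proposal follows essentially the same route as the paper: split off the rough linear evolution, solve for the correction $v$ by iteration/bootstrap in a wave-maps-type null-frame space at the critical regularity $s_\alpha$, feed in the probabilistic Strichartz bounds from Lemmas~\ref{lem:twoinftyrough}--\ref{lem:inftyinfty} for the $u_{\mathrm{lin}}$-factors, and exploit the $\partial_\mu\partial^\mu$ null structure together with the $|\nabla|^{-\alpha}$ smoothing to close below scaling. The paper indeed implements exactly this, with the bookkeeping packaged as a bootstrap on $\sum_k\|P_k v\|_{S_k}$ rather than a Banach fixed point on a single ball, and with the trilinear/hybrid null-form estimates you defer ("the technical heart") carried out by the usual dyadic frequency--modulation--angular-sector case analysis (self-interactions of $u_1$, mixed $u_1$--$v$, and $v$--$v$); so your outline is the right one, but the quantitative verification you flag as the obstacle constitutes the actual bulk of the paper's argument and would have to be supplied in full.
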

 \begin{rem}\label{rem:thm}Observe that when $\alpha =0$, the nonlinearity no longer has any smoothing effect, and achieving a supercritical well-posedness result will have to employ different techniques. 
 The present result appears to rely crucially on the improved range of Strichartz estimates due to Lemma~\ref{lem:twoinftyrough}.
  \end{rem}
  \begin{proof} Write $u_1 = e^{-it\sqrt{-\triangle}}f^{(\tilde{\omega}^{*}, \tilde{\omega}_1^{*}, \tilde{\omega}_3)}$. Then outside of exceptional parameter sets as in the statement of the theorem, we may assume that 
  \begin{equation}\label{eq:smallu1}
  \sum_N N^{s_{\alpha} -  \frac{\alpha}{2+}-\frac12}\big\|P_Nu_1\big\|_{L_t^2 L_x^\infty}\leq \epsilon_*^{\frac14},
  \end{equation}
   \begin{equation}\label{eq:smallu11}
  \big(\sum_N\sum_{c\in \mathcal{C}} [N^{s_{\alpha} -  \frac{\alpha}{2+}-\frac12}\big\|P_cP_{N}u_1\big\|_{L_t^2 L_x^\infty}]^2\big)^{\frac12}\leq \epsilon_*^{\frac14},
  \end{equation}
where now $\mathcal{C}$ is a covering of all of frequency space by cubes of diameter $\sim 1$. Repeating the argument in the preceding subsection, we may also assume that we have 
 \begin{equation}\label{eq:smallu111}
  \big(\sum_N\sum_{\kappa\in K_l} N^{s_{\alpha} -  \frac{\alpha}{2+}-\frac12}\big\|P_{\log N, \kappa}u_1\big\|_{L_t^2 L_x^\infty}^2\big)^{\frac12}\leq \epsilon_*^{\frac14},
  \end{equation}
provided $l\in [0,\log N]$. 

  Similarly, in light of Lemma~\ref{lem:inftyinfty}, we may assume 
  \begin{equation}\label{eq:smallu2}
  \sum_N N^{s_{\alpha} - \frac{\alpha}{2+}}\big\|P_Nu_1\big\|_{L_{t,x}^\infty}\leq \epsilon_*^{\frac14}. 
  \end{equation}
  Finally, recalling Lemma~\ref{lem:twoinfty}, we may assume for any fixed $M>4$ the bound 
    \begin{equation}\label{eq:smallu22}
  \sum_N N^{s_{\alpha} -  \frac{\alpha}{2+}-\frac12}\big\|P_N u_1\big\|_{L_t^2 L_x^M}\leq \epsilon_*^{\frac14}. 
  \end{equation}
  The equation for $v$ becomes schematically 
  \begin{equation}\label{eq:veqn}\begin{split}
  &\Box v = R_{\nu}^{(\alpha)}u_1 R^{\nu,(\alpha)}u_1 + R_{\nu}^{(\alpha)}u_1 R^{\nu,(\alpha)}v + R_{\nu}^{(\alpha)}v R^{\nu,(\alpha)}v,\\&v[0] = \big(v(0,\cdot), v_t(0,\cdot)\big) = (0,\,0).
  \end{split}\end{equation}
  We shall establish a fixed point here in a suitable space at regularity $\dot{H}^{s_{\alpha}}$. In light of the null-structure inherent in the nonlinearity, a variant of the norms used in \cite{Tao1} here works:
  \begin{equation}\label{eq:S_knorm}
  \|v\|_{S_k} = 2^{-k}\big(\|\nabla_{t,x}v\|_{\dot{X}^{s_{\alpha},\frac12,\infty}_k} + \|\nabla_{t,x}v\|_{S_k^{Str}} + \|\nabla_{t,x}v\|_{S_k^{ang}}\big),
  \end{equation}
  where we set (for scaling reasons) and assuming $p, s\in [1,\infty)$
  \begin{align*}
   &\|v\|_{S_k^{Str}}: = \bigcap_{\substack{\frac{1}{p} + \frac{1}{q}\leq \frac12\\ p>2+}}2^{-(s_\alpha +\frac{1}{p}+3(\frac{1}{q} - \frac12))k}L_t^p L_x^q,\\
   &\|v\|_{\dot{X}_k^{s,p,q}}: = 2^{s\cdot k}\big(\sum_{j\in \Z}[2^{p\cdot j}\|Q_j v\|_{L_{t,x}^2}]^q\big)^{\frac{1}{q}}\,1\leq q<\infty,\,\|v\|_{\dot{X}_k^{s,p,\infty}}: =  2^{s\cdot k}\sup_{j\in \Z}2^{p\cdot j}\|Q_j v\|_{L_{t,x}^2},
  \end{align*}
  and we use the following version of the null-frame spaces: 
  \begin{align*}
  \|v\|_{S_k^{ang}}: = \sup_{l>0}\big(\sum_{\kappa\in K_l}\big\|P_{k,\kappa}Q^{\pm}_{<k-2l}v\big\|_{S_{k,\pm\kappa}}^2\big)^{\frac12},
  \end{align*}
  where 
  \begin{align*}
  \big\|v\big\|_{S_{k,\kappa}}^2: = \big\|v\big\|_{S_{k}^{Str}}^2 + 2^{2k\cdot s_\alpha }\sup_{\omega\notin 2\kappa}\text{dist}(\omega, \kappa)^2\big\|v\|_{L^\infty_{t_\omega} L^2_{x_{\omega^\perp}}}^2 + 2^{2k\cdot(s_{\alpha} -1)}2^{2l}\big\|v\big\|_{PW[\kappa]}^2
  \end{align*}
  and we set 
  \begin{align*}
  \big\|v\big\|_{PW[\kappa]}: = \inf_{v = \int_{\omega'}v^{\omega'}}\int_{\omega'\in \kappa}\big\|v^{\omega'}\big\|_{L^2_{t_{\omega'}} L^\infty_{x_{{\omega'}^\perp}}}\,d\omega'
  \end{align*}
  For future reference, we shall use the notation
  \begin{align*}
  \sup_{\omega\notin 2\kappa}\text{dist}(\omega, \kappa)\big\|v\|_{L^\infty_{t_\omega} L^2_{x_{\omega^\perp}}}=: \big\|v\big\|_{NFA[\kappa]^*}
  \end{align*}
  
  For the source terms, we employ the norm associated with the space given by 
  \[
  N_k= L_t^1 \dot{H}^{s_{\alpha - 1}} + \dot{X}_k^{s_\alpha - 1,-\frac12, 1} + NF_k,
  \]
  which involves the somewhat abstract null-frame space $NF_k$ associated to the norm 
  \begin{align*}
  \big\|F\big\|_{NF_k}: = 2^{(s_\alpha - 1)k}\inf_{l>100}\inf_{F = \sum_{\kappa\in K_l}F_{\kappa}^{\pm}}\big(\sum_{\kappa\in K_l}\big\|F_{\kappa}^{\pm}\big\|_{NFA[\pm\kappa]}^2\big)^{\frac12},
  \end{align*}
  where it is understood that in $F = \sum_{\kappa\in K_l}F_{\kappa}^{\pm}$ either the $+$ or the $-$-sign applies everywhere, and each $F^{\pm}_{\kappa}$ has space-time Fourier support contained in $\pm\tau>0, \big||\tau|-|\xi|\big|<2^{k-2l}, \xi\in \kappa$, and we define 
  \begin{align*}
  \big\|F\big\|_{NFA[\kappa]}: = \inf_{\omega'\notin 2\kappa}\text{dist}(\omega', \kappa)^{-1}\big\|F\big\|_{L_{t_{\omega'}}^1 L_{x_{\omega'}^{\perp}}^2}.
  \end{align*}
  Then from \cite{Tao1} we have the key energy inequality for Schwartz functions $\phi_k$ supported at spatial frequency $\sim 2^k$: 
  \begin{equation}\label{eq:energy}
  \big\|\phi_k\big\|_{S_k}\lesssim \big\|\phi_k[0]\big\|_{\dot{H}^{s_{\alpha}}\times \dot{H}^{s_{\alpha - 1}}} + \big\|\Box \phi_k\big\|_{N_k}
  \end{equation}
  The proof of Theorem~\ref{thm:alpharough} is then accomplished by proving the following 
  \begin{prop}\label{prop:alpha rough} Assuming the bounds \eqref{eq:smallu1}, \eqref{eq:smallu2}, as well as the other assumptions of the theorem, and choosing $\epsilon_*$ small enough, the bound
  \begin{align*}
  \sum_{k\in \Z}\big\|P_k v\big\|_{S_k}<\epsilon_*
  \end{align*}
  implies the improved bound 
   \begin{align*}
  \sum_{k\in \Z}\big\|P_k v\big\|_{S_k}<\frac12\epsilon_*
  \end{align*}
  \end{prop}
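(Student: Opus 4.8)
The plan is to run a standard contraction/bootstrap argument for the equation \eqref{eq:veqn}, using the energy inequality \eqref{eq:energy} to reduce everything to estimating the three source terms $R_\nu^{(\alpha)}u_1 R^{\nu,(\alpha)}u_1$, $R_\nu^{(\alpha)}u_1 R^{\nu,(\alpha)}v$, and $R_\nu^{(\alpha)}v R^{\nu,(\alpha)}v$ in the space $N_k$ after a frequency projection $P_k$. For each dyadic output block $P_k$ one decomposes the two input factors into Littlewood-Paley pieces; the crucial structural input is the null form $R_\nu^{(\alpha)}\phi R^{\nu,(\alpha)}\psi$, which, as in \cite{Tao1}, gains an angular factor $\sim \te_{12}$ (the angle between the Fourier supports of the two inputs) relative to the naive product, and this is exactly what the $S_k^{ang}$ component of the norm and the null-frame spaces $NF_k$ are designed to absorb. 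So the bilinear estimate one must prove, roughly, is
\begin{equation*}
\big\|P_k\big(R_\nu^{(\alpha)}\phi_{k_1} R^{\nu,(\alpha)}\psi_{k_2}\big)\big\|_{N_k}\lesssim \big\|\phi_{k_1}\big\|_{S_{k_1}}\big\|\psi_{k_2}\big\|_{S_{k_2}}
\end{equation*}
for the $v\cdot v$ interaction, together with its mixed analogue in which one factor is $u_1$ (measured in the randomized Strichartz norms from Lemmas~\ref{lem:twoinfty}, \ref{lem:twoinftyrough}, \ref{lem:inftyinfty} rather than in $S_k$) and the purely-linear $u_1\cdot u_1$ term. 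Given these, summing over $k$ and over the internal frequency configurations with the $\ell^1$ structure $\sum_k\|P_kv\|_{S_k}$ closes the bootstrap: the $v\cdot v$ term contributes $O(\epsilon_*^2)$, the mixed term $O(\epsilon_*^{3/4}\cdot\epsilon_*)=O(\epsilon_*^{7/4})$, and the constant $u_1\cdot u_1$ term $O(\epsilon_*^{1/2})$ — wait, one must be careful here, so the point is that the $u_1\cdot u_1$ term is the inhomogeneous forcing and must itself be $\leq \tfrac14\epsilon_*$, which is where the smoothing $|\nabla|^{-2\alpha}$ coming from the two Riesz operators together with the gain $s>s_\alpha-\tfrac{\alpha}{2}$ and the improved randomized Strichartz exponents are used.

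Concretely I would organize the case analysis by the standard trichotomy on $(k_1,k_2,k)$: (i) \emph{high-high} interactions $k_1\sim k_2\gg k$, (ii) \emph{high-low} $k_1\sim k\gg k_2$ (and the symmetric case), and within each, a secondary split according to the size of the output modulation $2^j$ versus $2^k$ and the angle $\te_{12}$. In the high-high case the output modulation is forced to be $\gtrsim 2^k\te_{12}^2$ (or the inputs have large modulation), so one places $F$ in the $\dot X_k^{s_\alpha-1,-1/2,1}$ component; in the low-modulation high-high and in the high-low cases one uses the genuinely null-frame pieces $NF_k$, decomposing the inputs into angular sectors $P_{k_i,\kappa_i}$ with $\te_{12}\sim 2^{-l}$ and invoking the null-frame bilinear estimate $\|P_\kappa\phi\cdot P_{\kappa'}\psi\|_{L^2_{t,x}}\lesssim$ (geometry)$\cdot\|\phi\|_{S_{k_1,\kappa}}\|\psi\|_{PW[\kappa']}$ type bounds from \cite{Tao1}. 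For the terms involving $u_1$, instead of the $S_k$-norm on that factor one feeds in the randomized bounds: $\|P_Nu_1\|_{L^2_tL^\infty_x}$ with the weight $N^{s_\alpha-\alpha/2+-1/2}$ from \eqref{eq:smallu1}, the square-sum cube version \eqref{eq:smallu11}, the angular-sector version \eqref{eq:smallu111}, the $L^\infty_{t,x}$ version \eqref{eq:smallu2}, and the $L^2_tL^M_x$ version \eqref{eq:smallu22} — choosing in each frequency configuration whichever of these matches the dual null-frame or Strichartz norm that the $S_k$/$N_k$ structure wants to pair against. The extra derivative deficit $\tfrac{\alpha}{2}$ available on $u_1$ (since $s>s_\alpha-\tfrac\alpha2$ and each Riesz operator is order $1-\alpha$, giving total smoothing $2-2\alpha$ against a nonlinearity that would naively need regularity at level $s_\alpha$) is precisely the slack that lets all the $u_1$-factors be summed.

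The main obstacle I anticipate is the mixed $u_1\cdot v$ interaction in the high-low regime where $v$ sits at the high output frequency and $u_1$ at low frequency: here $v$ is only controlled at the critical regularity $\dot H^{s_\alpha}$ (no smoothing slack on that factor), so the estimate must be genuinely sharp, and one is forced to use the null structure to its full strength — i.e.\ to exploit the angular gain $\te_{12}$ together with the angular-sector randomized bound \eqref{eq:smallu111} on $u_1$ restricted to caps $\kappa$ with $|\kappa|\sim 2^{-l}$, matched against the $S_{k,\kappa}$-norm on the corresponding angular piece of $v$. Making the cap decompositions on the two factors compatible (the cap of $v$ and the cap of $u_1$ must be within angle $\te_{12}$ of each other, and $l$ must range over $[0,\log N]$ as in \eqref{eq:smallu111}), and summing the resulting double series in $l$ and in the frequencies without losing the $\ell^1_k$ structure, is the delicate point; everything else is a bookkeeping adaptation of the Wave Maps estimates of \cite{Tao1} with Strichartz inputs upgraded to their randomized counterparts. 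A secondary technical annoyance is the low-frequency/$k\le 0$ part of the sum $\sum_{k\in\Z}$, which for the Wave-Maps-type null form is handled exactly as in \cite{Tao1} (the sum over $k\to-\infty$ converges because of the derivative on the output and the smoothing in the nonlinearity), and I would simply cite that.
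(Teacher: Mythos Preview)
Your high-level plan matches the paper: reduce via \eqref{eq:energy} to $\sum_k\|P_k\Box v\|_{N_k}\ll\epsilon_*$, split the three source terms by the Littlewood--Paley trichotomy and then by output modulation, and close with $O(\epsilon_*^{5/4})$ for the $u_1u_1$ and $u_1v$ terms and $O(\epsilon_*^2)$ for $vv$.

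The substantive difference concerns the case you flag as the ``main obstacle,'' the mixed low-high term with $u_1$ low and $v$ high. You propose to extract the angular null-form gain by cap-decomposing both factors, pairing the cap-localized randomized bound \eqref{eq:smallu111} on $u_1$ against the $S_{k,\kappa}$ pieces of $v$. The paper does \emph{not} do this for the mixed interactions: in all three frequency regimes (2.a)--(2.c) it instead uses the algebraic identity
\[
2R_\nu^{(\alpha)}\phi\,R^{\nu,(\alpha)}\psi=\Box\big(|\nabla|^{-\alpha}\phi\cdot|\nabla|^{-\alpha}\psi\big)-|\nabla|^{-\alpha}(\Box\phi)\cdot|\nabla|^{-\alpha}\psi-|\nabla|^{-\alpha}\phi\cdot|\nabla|^{-\alpha}(\Box\psi).
\]
Since $\Box u_1=0$, only two terms survive. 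The one with $\Box$ on the output gains $2^{j+k_2}$ at output modulation $2^j$ and lands in $\dot X^{s_\alpha-1,-1/2,1}$; the one with $\Box v_{k_2}$ is controlled directly by the $\dot X^{s_\alpha,1/2,\infty}$ component of $\|v_{k_2}\|_{S_{k_2}}$ and is placed in $L^1_t\dot H^{s_\alpha-1}$. Both are then bounded by a straightforward H\"older pairing $L^2_tL^\infty_x\times L^\infty_tL^2_x$ or $L^2_tL^\infty_x\times L^2_{t,x}$ using only the plain randomized bound \eqref{eq:smallu1} on $u_1$ --- no angular decomposition on either factor, and none of the cap-compatibility bookkeeping you anticipate. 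The angular/null-frame machinery and the cap-localized bound \eqref{eq:smallu111} are reserved in the paper for the $u_1u_1$ self-interaction (case~(1), where both inputs sit exactly on the cone and the $\Box$-identity would leave only the output-$\Box$ term anyway) and for the $vv$ self-interaction (case~(3)). Your angular route for the mixed term is not wrong in principle, but the $\Box$-identity gives a much shorter argument and is what makes the role of the improved randomized $L^2_tL^\infty_x$ range transparent --- the paper explicitly notes at the $|\nabla|^{-\alpha}u_{1,k_1}\cdot|\nabla|^{-\alpha}\Box v_{k_2}$ step of (2.a) that ``it is here that the larger range of Strichartz estimates appears crucial.''
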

  
  This proposition, combined with standard arguments (see e. g. \cite{Tao1}, \cite{KrSch}), easily implies the theorem. In turn, in light of \eqref{eq:energy}, the proposition follows from the bound 
  \begin{align*}
  \sum_{k\in \Z}\big\|P_k \Box v\big\|_{N_k}\ll \epsilon_*. 
  \end{align*}
  
  It remains to bound the various terms on the right hand side of \eqref{eq:veqn} with respect to $\sum_k\big\|\cdot\big\|_{N_k}$:
  \\
  
  {\it{(1) Self-interactions of $u_1$.}} Write 
  \begin{align*}
  R_{\nu}^{(\alpha)}u_1 R^{\nu,(\alpha)}u_1 &= \sum_{|k_1 - k_2|<10}R_{\nu}^{(\alpha)}u_{1, k_1} R^{\nu,(\alpha)}u_{1, k_2}\\
  & + \sum_{k_1<k_2-10} R_{\nu}^{(\alpha)}u_{1, k_1} R^{\nu,(\alpha)}u_{1, k_2}\\
  & + \sum_{k_2<k_1-10} R_{\nu}^{(\alpha)}u_{1, k_1} R^{\nu,(\alpha)}u_{1, k_2}\\
  \end{align*}
  The last two terms on the right are similar due to the symmetry. 
  \\
  
{\it{(1.a): Low-high interactions}}. Write this as 
\begin{align*}
\sum_{k_1<k_2-10} R_{\nu}^{(\alpha)}u_{1, k_1} R^{\nu,(\alpha)}u_{1, k_2}& = \sum_{k_1<k_2-10}\sum_{j<k_1}Q_j P_{k_2}[R_{\nu}^{(\alpha)}u_{1, k_1} R^{\nu,(\alpha)}u_{1, k_2}],
\end{align*}
keeping in mind that $u_1$ is a free wave. Observe that we can then write 
\begin{align*}
&Q_j P_{k_2}[R_{\nu}^{(\alpha)}u_{1, k_1} R^{\nu,(\alpha)}u_{1, k_2}]\\
& = \sum_{\pm,\pm,\pm}Q_j^{\pm} P_{k_2}[R_{\nu}^{(\alpha)}u_{1, k_1}^{\pm} R^{\nu,(\alpha)}u_{1, k_2}^{\pm}]\\
& = \sum_{\pm,\pm,\pm}\sum_{\text{dist}(\pm\kappa_1,\pm\kappa_2)\sim 2^{\frac{j-k_1}{2}}}Q_j^{\pm} P_{k_2}[R_{\nu}^{(\alpha)}P_{k_1,\kappa_1}u_{1, k_1}^{\pm} R^{\nu,(\alpha)}P_{k_2,\kappa_2}u_{1, k_2}^{\pm}]\\
\end{align*}

 Then we bound  
\begin{align*}
&\big\|Q_j P_{k_2}[R_{\nu}^{(\alpha)}u_{1, k_1} R^{\nu,(\alpha)}u_{1, k_2}]\big\|_{\dot{X}_{k_2}^{s_{\alpha} - 1,-\frac12, 1}}\\
&\lesssim 2^{-\frac{j}{2}}\cdot 2^{(\frac12 - 2\alpha)k_2}\\&\hspace{1cm}\cdot 2^{j-k_1}\cdot 2^{(1-\alpha)k_1}\cdot 2^{(1-\alpha)k_2}\big(\sum_{\kappa\in K_{\frac{j-k_1}{2}}}\big\|P_{k_1,\kappa}u_1\big\|_{L_t^2 L_x^\infty}^2\big)^{\frac12}\cdot \big(\sum_{\kappa\in K_{\frac{j-k_1}{2}}}\big\|P_{k_2,\kappa}u_1\big\|_{L_t^\infty L_x^2}^2\big)^{\frac12},
\end{align*}
where we have exploited the gain $2^{j-k_1}$ from the null-form due to the angular alignment of the factors. Then if $j>0$ we use the bound 
\begin{align*}
\big(\sum_{\kappa}\big\|P_{k_1,\kappa}u_1\big\|_{L_t^2 L_x^\infty}^2\big)^{\frac12}\lesssim \epsilon_*^{\frac14}\cdot 2^{(2\alpha - 1 + \frac{\alpha}{2+})k_1},
\end{align*}
while we get 
\begin{align*}
\big(\sum_{\kappa}\big\|P_{k_2,\kappa}u_1\big\|_{L_t^\infty L_x^2}^2\big)^{\frac12}\lesssim \epsilon_*\cdot 2^{(2\alpha - \frac32 +\frac{\alpha}{2+})k_2}.
\end{align*}
Inserting these bounds above and simplifying results in 
\begin{align*}
&\big\|Q_j P_{k_2}[R_{\nu}^{(\alpha)}u_{1, k_1} R^{\nu,(\alpha)}u_{1, k_2}]\big\|_{\dot{X}_{k_2}^{s_{\alpha} - 1,-\frac12, 1}}\\
&\lesssim 2^{\frac{j-k_1}{2}}\cdot 2^{\alpha(k_1 - k_2)}\cdot 2^{-\frac{k_1}{2}}\cdot 2^{\frac{\alpha}{2+}k_1 + \frac{\alpha}{2+}k_2}\cdot\epsilon_*^{\frac54}\\
&\lesssim 2^{\frac{j-k_1}{2}}\cdot 2^{\frac{\alpha}{2}(k_1 - k_2)}\cdot 2^{(0-)k_1}  \cdot \epsilon_*^{\frac54},\\
\end{align*}
and one can sum here over $j<k_1, 0<k_1<k_2$. 
 \\
 If $j\leq 0$, then we use the bounds 
 \begin{align*}
 &\big(\sum_{\kappa\in K_{\frac{j-k_1}{2}}}\big\|P_{k_1,\kappa}u_1^{\pm}\big\|_{PW[\pm\kappa]}^2\big)^{\frac12}\lesssim 2^{(2\alpha - \frac12 + \frac{\alpha}{2+})k_1}\cdot 2^{\frac{j-k_1}{2}}\cdot \epsilon_*,\\
 & \big(\sum_{\kappa\in K_{\frac{j-k_1}{2}}}\big\|P_{k_1,\kappa}u_1^{\pm}\big\|_{NFA[\pm\kappa]^*}^2\big)^{\frac12}\lesssim 2^{(2\alpha - \frac32 + \frac{\alpha}{2+})k_1}\cdot \epsilon_*,\\
 \end{align*}
 which gives 
 \begin{align*}
 &\big\|Q_j P_{k_2}[R_{\nu}^{(\alpha)}u_{1, k_1} R^{\nu,(\alpha)}u_{1, k_2}]\big\|_{\dot{X}_{k_2}^{s_{\alpha} - 1,-\frac12, 1}}\\
 &\lesssim 2^{\frac{j}{2}}\cdot 2^{\alpha(k_1 - k_2)}\cdot 2^{-\frac{k_1}{2}}\cdot 2^{\frac{\alpha}{2+}k_1 + \frac{\alpha}{2+}k_2}\cdot\epsilon_*^2\\
 &\lesssim 2^{\frac{j}{2}}\cdot 2^{\frac{\alpha}{2}(k_1 - k_2)}\cdot 2^{(0-)k_1}\cdot \epsilon_*^2,
 \end{align*}
 which can be summed over $j<0, 0<k_1<k_2$. 
 \\
 
 {\it{(1.b): High-high interactions}}. Write this as 
 \begin{align*}
 &\sum_{|k_1-k_2|<10}\sum_{k<k_1 +10}\sum_{j<k+10}Q_j P_{k}[R_{\nu}^{(\alpha)}u_{1, k_1} R^{\nu,(\alpha)}u_{1, k_2}]\\
 & + \sum_{|k_1-k_2|<10}\sum_{k<k_1 +10}\sum_{j\geq k+10}Q_j P_{k}[R_{\nu}^{(\alpha)}u_{1, k_1} R^{\nu,(\alpha)}u_{1, k_2}]\\
 \end{align*}
 Then we bound the first term on the right(with $j<k+10$) by 
 \begin{align*}
 &\big\|Q_j P_{k}[R_{\nu}^{(\alpha)}u_{1, k_1} R^{\nu,(\alpha)}u_{1, k_2}]\big\|_{\dot{X}_{k}^{s_{\alpha} - 1,-\frac12, 1}}\\
 &\lesssim 2^{-\frac{j}{2}}\cdot 2^{(\frac12 - 2\alpha)k}\cdot 2^{2(k-k_1) + j-k}\\&\hspace{1cm}\cdot 2^{(1-\alpha)k_1}\big(\sum_{\kappa}\|P_{k_1,\kappa}u_1\|_{L_t^2 L_x^\infty}^2\big)^{\frac12}\cdot 2^{(1-\alpha)k_2}\big(\sum_{\kappa}\|P_{k_2,\kappa}u_{1}\|_{ L_t^\infty L_x^2}^2\big)^{\frac12},\\
 \end{align*}
 where it is understood that the $\kappa$ range over $K_l$ with $l = k-k_1 + \frac{j-k}{2}$. 
 Using the bounds from before for the square sums over caps, we find provided $\frac{j+k}{2}\geq 0$
 \begin{align*}
 &\big\|Q_j P_{k}[R_{\nu}^{(\alpha)}u_{1, k_1} R^{\nu,(\alpha)}u_{1, k_2}]\big\|_{\dot{X}_{k}^{s_{\alpha} - 1,-\frac12, 1}}\\
 &\lesssim  2^{-\frac{j}{2}}\cdot 2^{(\frac12 - 2\alpha)k}\cdot 2^{2(k-k_1) + j-k}\cdot  2^{(1-\alpha)k_1}\cdot  2^{(1-\alpha)k_2}\\
 &\hspace{2cm}\cdot 2^{(2\alpha - 1 + \frac{\alpha}{2+})k_1}\cdot 2^{(2\alpha - \frac32 +\frac{\alpha}{2+})k_2}\cdot \epsilon_*^{\frac54}\\
 &\lesssim 2^{\frac{j-k}{2}}\cdot 2^{2(1-\alpha)(k-k_1)}\cdot 2^{(-\frac12 + 2\cdot \frac{\alpha}{2+})k_2}\cdot \epsilon_*^{\frac54},\\
 \end{align*}
 One can sum here over $j<k$ as well as $k<k_1 = k_2+O(1)$. The case $\frac{j+k}{2}<0$ is again handled by using the $PW[\kappa], NFA[\kappa]^*$ norms, analogously to the preceding case. 
 \\
 As for the term with large modulation, as the factors are free waves, we have (when $j\geq k+10$)
 \[
 Q_j P_{k}[R_{\nu}^{(\alpha)}u_{1, k_1} R^{\nu,(\alpha)}u_{1, k_2}] = Q_{k_1+O(1)} P_{k}[R_{\nu}^{(\alpha)}u_{1, k_1} R^{\nu,(\alpha)}u_{1, k_2}], 
 \]
 and it follows that we then have 
 \begin{align*}
  &\big\|Q_j P_{k}[R_{\nu}^{(\alpha)}u_{1, k_1} R^{\nu,(\alpha)}u_{1, k_2}]\big\|_{\dot{X}_{k}^{s_{\alpha} - 1,-\frac12, 1}}\\
  &\lesssim 2^{-\frac{k_1}{2}}\cdot 2^{(\frac12 - 2\alpha)k}\cdot 2^{(1-\alpha)k_1}\cdot  2^{(1-\alpha)k_2}\\
 &\hspace{2cm}\cdot 2^{(2\alpha - 1 + \frac{\alpha}{2+})k_1}\cdot 2^{(2\alpha - \frac32 +\frac{\alpha}{2+})k_2}\cdot \epsilon_*^{\frac54}\\
 &\lesssim 2^{(\frac12 - 2\alpha)k}\cdot 2^{(2\alpha-1 + 2\frac{\alpha}{2+})k_1}\cdot \epsilon_*^{\frac54}\\
 &\leq 2^{(\frac12 - 2\alpha)k}\cdot 2^{(2\alpha-\frac12)k_1}\cdot 2^{(0-)k_1}\cdot \epsilon_*^{\frac54}\\
 \end{align*}
 This is summable, recalling $\alpha<\frac14$. 
 \\
This concludes the estimates for the self-interactions of $u_1$, i. e. the first term on the right hand side of \eqref{eq:veqn}. 
 \\
 
 {\it{(2): Mixed interactions between $u_1$ and $v$.}} Write 
 \begin{align*}
  R_{\nu}^{(\alpha)}u_1 R^{\nu,(\alpha)}v & =  \sum_{|k_1 - k_2|<10}R_{\nu}^{(\alpha)}u_{1,k_1} R^{\nu,(\alpha)}v_{k_2}\\
  & + \sum_{k_1<k_2-10}R_{\nu}^{(\alpha)}u_{1,k_1} R^{\nu,(\alpha)}v_{k_2}\\
  & +  \sum_{k_2<k_1-10}R_{\nu}^{(\alpha)}u_{1,k_1} R^{\nu,(\alpha)}v_{k_2},\\
 \end{align*}
 where this time the last two terms are no longer identical. 
 \\
 
 {\it{(2.a): low-high interactions, i. e. the second term on the right.}} We decompose it further into a number of terms: 
 \begin{align*}
 R_{\nu}^{(\alpha)}u_{1,k_1} R^{\nu,(\alpha)}v_{k_2} & = P_{k_2}Q_{>k_1+10}[R_{\nu}^{(\alpha)}u_{1,k_1} R^{\nu,(\alpha)}v_{k_2}]\\
 & + \sum_{j<k_1+10}P_{k_2}Q_{j}[R_{\nu}^{(\alpha)}u_{1,k_1} R^{\nu,(\alpha)}Q_{<j}v_{k_2}]\\
 & + \sum_{j<k_1+10}P_{k_2}Q_{<j}[R_{\nu}^{(\alpha)}u_{1,k_1} R^{\nu,(\alpha)}Q_jv_{k_2}]\\
 \end{align*}
 We bound each of the terms on the right in turn, the first being easier: 
 \begin{align*}
&\|P_{k_2}Q_{>k_1+10}[R_{\nu}^{(\alpha)}u_{1,k_1} R^{\nu,(\alpha)}v_{k_2}]\|_{\dot{X}_{k}^{s_{\alpha} - 1,-\frac12, 1}}\\
&\lesssim 2^{-\frac{k_1}{2}}2^{(\frac12 - 2\alpha)k_2}\cdot 2^{(1-\alpha)k_1}\big\|P_{k_1}u_1\big\|_{L_t^2 L_x^\infty}\cdot 2^{(1-\alpha)k_2}\big(2^{-k_2}\big\|\nabla_{t,x}v_{k_2}\big\|_{L_t^\infty L_x^2}\big)\\
&\lesssim 2^{-\frac{k_1}{2}}2^{(\frac12 - 2\alpha)k_2}\cdot 2^{(1-\alpha)k_1}\cdot 2^{(2\alpha - 1)k_1}\cdot 2^{\frac{\alpha}{2+}k_1}\epsilon_*^{\frac14}\\
&\hspace{4cm}\cdot 2^{(1-\alpha)k_2}\cdot 2^{(2\alpha - \frac32)k_2}\cdot\epsilon_*\\
&\lesssim 2^{(\alpha - \frac12 + \frac{\alpha}{2+})k_1}\cdot 2^{-\alpha k_2}\cdot\epsilon_*^{\frac54},
 \end{align*}
 which is summable over $0<k_1<k_2$ for $0<\alpha<\frac14$, say. 
 \\
 For the second and third term on the right above, we have to take advantage of the null-structure: 
 \begin{align*}
2P_{k_2}Q_{j}[R_{\nu}^{(\alpha)}u_{1,k_1} R^{\nu,(\alpha)}Q_{<j}v_{k_2}]& = P_{k_2}Q_{j}\Box [|\nabla|^{-\alpha}u_{1,k_1} Q_{<j}|\nabla|^{-\alpha}v_{k_2}]\\
& - P_{k_2}Q_{j}[|\nabla|^{-\alpha}u_{1,k_1} Q_{<j}|\nabla|^{-\alpha}\Box v_{k_2}]\\
 \end{align*}
 Then we bound 
 \begin{align*}
 &\big\|P_{k_2}Q_{j}\Box [|\nabla|^{-\alpha}u_{1,k_1} Q_{<j}|\nabla|^{-\alpha}v_{k_2}]\big\|_{\dot{X}_{k}^{s_{\alpha} - 1,-\frac12, 1}}\\
 &\lesssim 2^{\frac{j}{2}}\cdot 2^{(\frac32 - 2\alpha)k_2}\cdot 2^{-\alpha k_1}\cdot\big\|u_{1,k_1}\big\|_{L_t^2 L_x^\infty}\cdot 2^{-\alpha k_2}\big\|v_{k_2}\big\|_{L_t^\infty L_x^2}\\
 &\lesssim 2^{\frac{j}{2}}\cdot 2^{(\frac32 - 2\alpha)k_2}\cdot 2^{-\alpha k_1}\cdot  2^{(2\alpha - 1)k_1}\cdot 2^{\frac{\alpha}{2+}k_1}\epsilon_*^{\frac14}\\
 &\hspace{4cm}\cdot 2^{-\alpha k_2}2^{(2\alpha - \frac32)k_2}\cdot \epsilon_*\\
 &\lesssim 2^{\frac{j-k_1}{2}}\cdot 2^{(\alpha + \frac{\alpha}{2+} - \frac12)k_1}\cdot 2^{-\alpha k_2}\epsilon_*^{\frac54},
 \end{align*}
 and this can be summed over $j<k_1+O(1)$, $0<k_1<k_2$, provided $0<\alpha<\frac14$. 
  \\
  
  Further, we have 
  \begin{align*}
  &\big\| P_{k_2}Q_{j}[|\nabla|^{-\alpha}u_{1,k_1} Q_{<j}|\nabla|^{-\alpha}\Box v_{k_2}]\big\|_{L_t^1 \dot{H}^{s_{\alpha} - 1}}\\
  &\lesssim 2^{(\frac12 - 2\alpha)k_2}\cdot 2^{-\alpha k_1}\big\|u_{1,k_1}\big\|_{L_t^2 L_x^\infty}\cdot 2^{-\alpha k_2}\big\|\Box Q_{<j}v_{k_2}\big\|_{L_{t,x}^2}\\
  &\lesssim 2^{(\frac12 - 2\alpha)k_2}\cdot 2^{-\alpha k_1}\cdot 2^{(2\alpha - 1 + \frac{\alpha}{2+})k_1}\cdot 2^{-\alpha k_2}\cdot 2^{\frac{j}{2}}\cdot 2^{(2\alpha - \frac12)k_2}\epsilon_*^{\frac54},
  \end{align*}
  which leads to the same bound as in the preceding case. We note that it is here that the larger range of Strichartz estimates appears crucial.
  The term with $Q_j, Q_{<j}$ interchanged is handled in the same way. 
  \\
  
  {\it{(2.b): high - low interactions, i. e. the expression $\sum_{k_2<k_1-10}R_{\nu}^{(\alpha)}u_{1,k_1} R^{\nu,(\alpha)}v_{k_2}$.}} Decompose this for fixed $k_2<k_1 - 10$ into 
  \begin{align*}
  R_{\nu}^{(\alpha)}u_{1,k_1} R^{\nu,(\alpha)}v_{k_2}& = P_{k_1}Q_{>k_2+10}[R_{\nu}^{(\alpha)}u_{1,k_1} R^{\nu,(\alpha)}v_{k_2}]\\
  & + \sum_{j\leq k_2+10}P_{k_1}Q_{j}[R_{\nu}^{(\alpha)}u_{1,k_1} R^{\nu,(\alpha)}Q_{<j}v_{k_2}]\\
  & +  \sum_{j\leq k_2+10}P_{k_1}Q_{<j}[R_{\nu}^{(\alpha)}u_{1,k_1} R^{\nu,(\alpha)}Q_{j}v_{k_2}]\\
  \end{align*}
  Observe that for the first term on the right, we have 
  \begin{align*}
  P_{k_1}Q_{>k_2+10}[R_{\nu}^{(\alpha)}u_{1,k_1} R^{\nu,(\alpha)}v_{k_2}] = P_{k_1}Q_{>k_2+10}[R_{\nu}^{(\alpha)}u_{1,k_1} R^{\nu,(\alpha)}Q_{>k_2}v_{k_2}]
  \end{align*}
  Then we get 
  \begin{align*}
  &\big\|P_{k_1}Q_{>k_2+10}[R_{\nu}^{(\alpha)}u_{1,k_1} R^{\nu,(\alpha)}Q_{>k_2}v_{k_2}]\big\|_{\dot{X}_{k}^{s_{\alpha} - 1,-\frac12, 1}}\\
&\lesssim 2^{-\frac{k_2}{2}}\cdot 2^{(\frac12 - 2\alpha)k_1}\cdot 2^{(1-\alpha)k_1}\cdot \|u_{k_1}\|_{L_{t,x}^\infty}\cdot 2^{(1-\alpha)k_2}\cdot \big(2^{-k_2}\|\nabla_{t,x}Q_{>k_2}v_{k_2}\|_{L_{t,x}^2}\big)\\
&\lesssim 2^{-\frac{k_2}{2}}\cdot 2^{(\frac12 - 2\alpha)k_1}\cdot 2^{(1-\alpha)k_1}\cdot 2^{(2\alpha - \frac32 + \frac{\alpha}{2+})k_1}\cdot 2^{(1-\alpha)k_2}\cdot 2^{-\frac{k_2}{2}}\cdot 2^{(2\alpha - \frac32)k_2}\epsilon_*^{\frac54}\\
&\lesssim 2^{-\frac{\alpha}{2} k_1}\cdot 2^{(\alpha -\frac32)k_2}\epsilon_*^{\frac54}, 
  \end{align*}
which can be summed over $0<k_2<k_1-10$, if $0<\alpha<1$. If we restrict $k_2\leq 0$, we instead use 
\begin{align*}
 &\big\|P_{k_1}Q_{>k_2+10}[R_{\nu}^{(\alpha)}u_{1,k_1} R^{\nu,(\alpha)}Q_{>k_2}v_{k_2}]\big\|_{\dot{X}_{k}^{s_{\alpha} - 1,-\frac12, 1}}\\
 &\lesssim  2^{-\frac{k_2}{2}}\cdot 2^{(\frac12 - 2\alpha)k_1}\cdot 2^{(1-\alpha)k_1}\cdot \|u_{k_1}\|_{L_t^\infty L_x^2}\cdot 2^{(1-\alpha)k_2}\cdot \big(2^{-k_2}\|\nabla_{t,x}Q_{>k_2}v_{k_2}\|_{L_t^2 L_x^\infty}\big)\\
 &\lesssim  2^{-\frac{k_2}{2}}\cdot 2^{(\frac12 - 2\alpha)k_1}\cdot 2^{(1-\alpha)k_1}\cdot 2^{(2\alpha - \frac32 + \frac{\alpha}{2+})k_1}\cdot 2^{(1-\alpha)k_2}\cdot 2^{(2\alpha - \frac12)k_2}\epsilon_*^{\frac54}\\
 &\lesssim 2^{-\frac{\alpha}{2}k_1}\cdot 2^{\alpha k_2}\cdot \epsilon_*^{\frac54},\\
\end{align*}
which can be summed over $k_2\leq 0, k_1>0$. 
 \\
 
 Next, we again use the null-structure to write 
 \begin{align*}
 2P_{k_1}Q_{j}[R_{\nu}^{(\alpha)}u_{1,k_1} R^{\nu,(\alpha)}Q_{<j}v_{k_2}]& = \Box P_{k_1}Q_{j}[|\nabla|^{-\alpha}u_{1,k_1} |\nabla|^{-\alpha}Q_{<j}v_{k_2}]\\
 & - P_{k_1}Q_{j}[|\nabla|^{-\alpha}u_{1,k_1} |\nabla|^{-\alpha}Q_{<j}\Box v_{k_2}]
 \end{align*}
  Then we bound 
  \begin{align*}
  &\big\|\Box P_{k_1}Q_{j}[|\nabla|^{-\alpha}u_{1,k_1} |\nabla|^{-\alpha}Q_{<j}v_{k_2}]\big\|_{\dot{X}_{k}^{s_{\alpha} - 1,-\frac12, 1}}\\
  &\lesssim 2^{\frac{j}{2}}\cdot 2^{(\frac32 - 2\alpha)k_1}\cdot 2^{-\alpha k_1}\cdot \big\|u_{k_1}\big\|_{L_t^p L_x^{\frac{2p}{p-1}+}}\cdot 2^{-\alpha k_2}\big\| v_{k_2}\big\|_{L_t^r L_x^{2p-}}\\
 \end{align*}
  where we have $\frac{1}{p} + \frac{1}{r} = \frac12$, $p$ large. Observe that $(r, 2p-)$ is (standard) Strichartz admissible in $3+1$ dimensions.   Then by interpolating between $L_t^2 L_x^{4+}$ and $L_t^\infty L_x^2$, recalling \eqref{eq:smallu22}, and picking $p$ very large, we bound the preceding by 
\begin{align*}
&2^{\frac{j}{2}}\cdot 2^{(\frac32 - 2\alpha)k_1}\cdot 2^{-\alpha k_1}\cdot \big\|u_{k_1}\big\|_{L_t^p L_x^{\frac{2p}{p-1}+}}\cdot 2^{-\alpha k_2}\big\| v_{k_2}\big\|_{L_t^r L_x^{2p-}}\\
&\lesssim 2^{\frac{j}{2}}\cdot 2^{(\frac32 - 2\alpha)k_1}\cdot 2^{-\alpha k_1}\cdot 2^{(2\alpha - \frac32+\frac{\alpha}{2+})k_1}\cdot 2^{-\alpha k_2}\cdot 2^{(2\alpha - \frac12)k_2}\cdot\epsilon_*^{\frac54}\\
\end{align*}
This can be summed over $k_2<\frac12 k_1>0$, $j<k_2$. Hence assume now $k_2\geq \frac12 k_1$. Then we use 
 \begin{align*}
  &\big\|\Box P_{k_1}Q_{j}[|\nabla|^{-\alpha}u_{1,k_1} |\nabla|^{-\alpha}Q_{<j}v_{k_2}]\big\|_{\dot{X}_{k}^{s_{\alpha} - 1,-\frac12, 1}}\\
  &\lesssim 2^{\frac{j}{2}}\cdot 2^{(\frac32 - 2\alpha)k_1}\cdot 2^{-\alpha k_1}\cdot \big\| u_{k_1}\big\|_{L_t^2 L_x^\infty}\cdot 2^{-\alpha k_2}\cdot \big\|v_{k_2}\big\|_{L_t^\infty L_x^2}\\
  &\lesssim 2^{\frac{j-k_2}{2}}\cdot 2^{(\frac32 - 2\alpha)k_1}\cdot 2^{-\alpha k_1}\cdot 2^{(2\alpha - 1 + \frac{\alpha}{2+})k_1}\cdot 2^{-\alpha k_2}\cdot 2^{(2\alpha - 1)k_2}\cdot\epsilon_*^{\frac54}\\
  &\lesssim  2^{\frac{j-k_2}{2}}\cdot 2^{(\frac12 - \alpha + \frac{\alpha}{2+})k_1}\cdot 2^{(\frac{\alpha}{2} - \frac12)k_1}\cdot\epsilon_*^{\frac54},\\
  \end{align*}
  which can be summed over $0<k_2<k_1<2k_2,\,j<k_2$. 
  \\
  
  Next, consider the term $P_{k_1}Q_{j}[|\nabla|^{-\alpha}u_{1,k_1} |\nabla|^{-\alpha}Q_{<j}\Box v_{k_2}]$. We bound this by 
  \begin{align*}
  &\big\|P_{k_1}Q_{j}[|\nabla|^{-\alpha}u_{1,k_1} |\nabla|^{-\alpha}Q_{<j}\Box v_{k_2}]\big\|_{L_t^1 \dot{H}^{s_\alpha - 1}}\\
  &\lesssim 2^{(\frac12 - 2\alpha)k_1}\cdot 2^{-\alpha k_1}\big\|u_{1,k_1}\big\|_{L_t^2 L_x^\infty}\cdot 2^{-\alpha k_2}\big\|Q_{<j}\Box v_{k_2}\big\|_{L_{t,x}^2}\\
  &\lesssim 2^{(\frac12 - 2\alpha)k_1}\cdot 2^{-\alpha k_1}\cdot 2^{(2\alpha - 1+ \frac{\alpha}{2+})k_1}\cdot 2^{-\alpha k_2}\cdot 2^{\frac{j}{2}}\cdot 2^{(2\alpha - \frac12)k_2}\cdot \epsilon_*^{\frac54},
  \end{align*}
  and this can be summed over $k_2<k_1$, $j<k_2$.
  \\
  
  The remaining term $\sum_{j\leq k_2+10}P_{k_1}Q_{<j}[R_{\nu}^{(\alpha)}u_{1,k_1} R^{\nu,(\alpha)}Q_{j}v_{k_2}]$ is handled similarly. 
  \\
  
  {\it{(2.c): High-high interactions}}. This is the expression 
  \[
  \sum_{|k_1 - k_2|<10}R_{\nu}^{(\alpha)}u_{1,k_1} R^{\nu,(\alpha)}v_{k_2} = \sum_{|k_1 - k_2|<10}\sum_{k<k_1+10}P_k\big[R_{\nu}^{(\alpha)}u_{1,k_1} R^{\nu,(\alpha)}v_{k_2}\big].
  \]
  Fixing $k, k_{1,2}$, we decompose the term further into 
  \begin{equation}\label{eq:hhmess}\begin{split}
  P_k\big[R_{\nu}^{(\alpha)}u_{1,k_1} R^{\nu,(\alpha)}v_{k_2}\big]&= P_k\big[R_{\nu}^{(\alpha)}u_{1,k_1} R^{\nu,(\alpha)}Q_{\geq k}v_{k_2}\big]\\
  & + P_kQ_{\geq k+10}\big[R_{\nu}^{(\alpha)}u_{1,k_1} R^{\nu,(\alpha)}Q_{<k}v_{k_2}\big]\\
  & + \sum_{j<k+10}P_kQ_{j}\big[R_{\nu}^{(\alpha)}u_{1,k_1} R^{\nu,(\alpha)}Q_{<j}v_{k_2}\big]\\
  & +  \sum_{j<k+10}P_kQ_{<j}\big[R_{\nu}^{(\alpha)}u_{1,k_1} R^{\nu,(\alpha)}Q_{j}v_{k_2}\big]\\
  \end{split}\end{equation}
 We estimate each of these terms in turn.  For the first term on the right, write it as 
 \begin{align*}
 &P_k\big[R_{\nu}^{(\alpha)}u_{1,k_1} R^{\nu,(\alpha)}Q_{\geq k}v_{k_2}\big]\\
 & = P_k\big[R_{\nu}^{(\alpha)}u_{1,k_1} R^{\nu,(\alpha)}Q_{k_1-10>\cdot\geq k}v_{k_2}\big]\\
 & + P_k\big[R_{\nu}^{(\alpha)}u_{1,k_1} R^{\nu,(\alpha)}Q_{\geq k_1-10}v_{k_2}\big].\\
 \end{align*}
  Then bound the second term on the right by 
  \begin{align*}
  &\big\|P_k\big[R_{\nu}^{(\alpha)}u_{1,k_1} R^{\nu,(\alpha)}Q_{\geq k_1-10}v_{k_2}\big]\big\|_{L_t^1 \dot{H}^{s_{\alpha} - 1}}\\
 &\lesssim 2^{(\frac12 - 2\alpha)k}\cdot 2^{(1-\alpha)k_1}\cdot \big\|u_{1,k_1}\big\|_{L_t^2 L_x^\infty}\cdot 2^{(1-\alpha)k_2}\big(2^{-k_2}\big\|\nabla_{t,x}Q_{\geq k_1-10}v_{k_2}\big\|_{L_{t,x}^2}\big)\\
 &\lesssim 2^{(\frac12 - 2\alpha)k}\cdot 2^{(1-\alpha)k_1}\cdot 2^{(2\alpha - 1 + \frac{\alpha}{2+})k_1}\cdot 2^{(1-\alpha)k_2}\cdot 2^{-\frac{k_1}{2}}\cdot 2^{(2\alpha - \frac32)k_2}\cdot\epsilon_*^{\frac54}\\
 &\lesssim 2^{(\frac12 - 2\alpha)k}\cdot 2^{(2\alpha - 1 + \frac{\alpha}{2+})k_1}\cdot \epsilon_*^{\frac54},\\
  \end{align*}
  which can be summed over $k<k_1 = k_2+O(1)$, $k_1>0$, provided $\alpha <\frac14$. Next, consider the first term on the right above, which is a bit more subtle. In fact, we can decompose it further into
  \begin{align*}
  P_k\big[R_{\nu}^{(\alpha)}u_{1,k_1} R^{\nu,(\alpha)}Q_{k_1-10>\cdot\geq k}v_{k_2}\big]& = \sum_{\pm}P_k\big[R_{\nu}^{(\alpha)}u_{1,k_1}^{\pm} R^{\nu,(\alpha)}Q_{k_1-10>\cdot\geq k}^{\pm}v_{k_2}\big]\\
  & +  \sum_{\pm}P_k\big[R_{\nu}^{(\alpha)}u_{1,k_1}^{\pm} R^{\nu,(\alpha)}Q_{k_1-10>\cdot\geq k}^{\mp}v_{k_2}\big]\\
  \end{align*}
  Then the fact that $u_{1,k_1}$ is a free wave implies 
  \begin{align*}
  &\sum_{\pm}P_k\big[R_{\nu}^{(\alpha)}u_{1,k_1}^{\pm} R^{\nu,(\alpha)}Q_{k_1-10>\cdot\geq k}^{\pm}v_{k_2}\big]\\& = \sum_{\pm}P_kQ_{\geq k_1-10}\big[R_{\nu}^{(\alpha)}u_{1,k_1}^{\pm} R^{\nu,(\alpha)}Q_{k_1-10>\cdot\geq k}^{\pm}v_{k_2}\big],\\
  \end{align*}
  and so we can bound it by 
  \begin{align*}
  &\big\|\sum_{\pm}P_k\big[R_{\nu}^{(\alpha)}u_{1,k_1}^{\pm} R^{\nu,(\alpha)}Q_{k_1-10>\cdot\geq k}^{\pm}v_{k_2}\big]\big\|_{\dot{X}^{s_{\alpha} - 1,-\frac12, 1}}\\
  &\lesssim  2^{(\frac12 - 2\alpha)k}\cdot 2^{-\frac{k_1}{2}}\cdot 2^{(1-\alpha)k_1}\cdot\big\|u_{1,k_1}^{\pm}\big\|_{L_{t,x}^\infty}\cdot 2^{(1-\alpha)k_2}\cdot \big\|Q_{k_1-10>\cdot\geq k}^{\pm}v_{k_2}\big\|_{L_{t,x}^2}\\
  &\lesssim  2^{(\frac12 - 2\alpha)k}\cdot 2^{-\frac{k_1}{2}}\cdot 2^{(1-\alpha)k_1}\cdot 2^{(2\alpha - \frac32 + \frac{\alpha}{2+})k_1}\cdot 2^{(1-\alpha)k_2}\cdot 2^{-\frac{k}{2}}\cdot 2^{(2\alpha - \frac32)k_2}\cdot\epsilon_*^{\frac54}\\
  &\lesssim 2^{-2\alpha k}\cdot 2^{(2\alpha - \frac32 +  \frac{\alpha}{2+})k_1}\cdot \epsilon_*^{\frac54},\\
  \end{align*}
  which can be summed over $0<k<k_1 = k_2+O(1)$ provided $0<\alpha<\frac14$, say. To handle the case $k<0$, one changes $L_{t,x}^\infty$ to $L_t^\infty L_x^2$ and applies Bernstein's inequality to the whole expression to place it into $L_{t,x}^2$. 
  \\
  
  Consider now the term $\sum_{\pm}P_k\big[R_{\nu}^{(\alpha)}u_{1,k_1}^{\pm} R^{\nu,(\alpha)}Q_{k_1-10>\cdot\geq k}^{\mp}v_{k_2}\big]$. Here the presence of the two derivatives $\partial_{\nu}, \partial^{\nu}$ gains a factor $2^{l-k_1}$ if we fix the modulation of the term $Q_{k_1-10>\cdot\geq k}^{\mp}v_{k_2}$ to size $\sim 2^l$, and so we can bound this by 
 \begin{align*}
 &\big\|\sum_{\pm}P_k\big[R_{\nu}^{(\alpha)}u_{1,k_1}^{\pm} R^{\nu,(\alpha)}Q_{k_1-10>\cdot\geq k}^{\mp}v_{k_2}\big]\big\|_{L_t^1 \dot{H}^{s_{\alpha} - 1}}\\
  &\lesssim \sum_{l\in [k, k_1 - 10]}2^{(\frac12 - 2\alpha)k}\cdot 2^{(1-\alpha)k_1}\cdot 2^{l-k_1}\cdot\big\|u_{1,k_1}^{\pm}\big\|_{L_t^2 L_x^\infty}\cdot 2^{(1-\alpha)k_2}\cdot\big\|Q_{l}^{\mp}v_{k_2}\big\|_{L_{t,x}^2}\\
  &\lesssim \sum_{l\in [k, k_1 - 10]}2^{(\frac12 - 2\alpha)k}\cdot 2^{(1-\alpha)k_1}\cdot 2^{l - k_1}\cdot 2^{(2\alpha - 1 + \frac{\alpha}{2+})k_1}\cdot 2^{(1-\alpha)k_2}\cdot 2^{-\frac{l}{2}}\cdot 2^{(2\alpha - \frac32)k_2}\epsilon_*^{\frac54}\\
  &\lesssim \sum_{l\in [k, k_1 - 10]} 2^{(\frac12-2\alpha) k}\cdot 2^{\frac{l-k_1}{2}}\cdot 2^{(2\alpha - 1 + \frac{\alpha}{2+})k_1}\cdot\epsilon_*^{\frac54},\\
 \end{align*}
  and this can be summed over $k<k_1 =k_2+O(1)>0$. 
  \\
  
  The second term on the right of \eqref{eq:hhmess} is treated by observing that 
  \[
  P_kQ_{\geq k+10}\big[R_{\nu}^{(\alpha)}u_{1,k_1} R^{\nu,(\alpha)}Q_{<k}v_{k_2}\big] = P_kQ_{>k_1-10}\big[R_{\nu}^{(\alpha)}u_{1,k_1} R^{\nu,(\alpha)}Q_{<k}v_{k_2}\big],
  \]
  due to the fact that $u_{1,k_1}$ is a free wave, and this can then be bounded by 
  \begin{align*}
  &\big\|P_kQ_{>k_1-10}\big[R_{\nu}^{(\alpha)}u_{1,k_1} R^{\nu,(\alpha)}Q_{<k}v_{k_2}\big]\big\|_{\dot{X}^{s_{\alpha} - 1,-\frac12, 1}}\\
  &\lesssim 2^{-\frac{k_1}{2}}\cdot 2^{(\frac12 - 2\alpha)k}\cdot 2^{(1-\alpha)k_1}\cdot \big\|u_{1,k_1}\big\|_{L_t^2 L_x^\infty}\cdot 2^{(1-\alpha)k_2}\big\|Q_{<k}v_{k_2}\big\|_{L_t^\infty L_x^2}\\
  &\lesssim 2^{-\frac{k_1}{2}}\cdot 2^{(\frac12 - 2\alpha)k}\cdot 2^{(1-\alpha)k_1}\cdot 2^{(2\alpha - 1 + \frac{\alpha}{2+})k_1}\cdot 2^{(1-\alpha)k_2}\cdot 2^{(2\alpha - \frac32)k_2}\cdot\epsilon_*^{\frac54}\\
  &\lesssim 2^{(\frac12 - 2\alpha)k}\cdot 2^{(2\alpha - 1 + \frac{\alpha}{2+})k_1}\cdot\epsilon_*^{\frac54}.\\
  \end{align*}
  which can be summed over $k<k_1 = k_2+O(1)>0$ provided $\alpha<\frac14$. 
  \\
  
  As for the third and fourth terms in \eqref{eq:hhmess}, they are handled similarly, and so we consider only the fourth term, which we expand as usual: 
 \begin{align*}
 2P_kQ_{<j}\big[R_{\nu}^{(\alpha)}u_{1,k_1} R^{\nu,(\alpha)}Q_{j}v_{k_2}\big]& = \Box P_kQ_{<j}\big[|\nabla|^{-\alpha}u_{1,k_1} |\nabla|^{-\alpha}Q_{j}v_{k_2}\big]\\
 & - P_kQ_{<j}\big[|\nabla|^{-\alpha}u_{1,k_1} |\nabla|^{-\alpha}Q_{j}\Box v_{k_2}\big]\\
 \end{align*}
 Then we get 
 \begin{align*}
&\big\|\Box P_kQ_{<j}\big[|\nabla|^{-\alpha}u_{1,k_1} |\nabla|^{-\alpha}Q_{j}v_{k_2}\big]\big\|_{\dot{X}^{s_{\alpha} - 1,-\frac12, 1}}\\
&\lesssim 2^{\frac{j}{2}} 2^{(\frac32 - 2\alpha)k}\cdot 2^{-\alpha k_1}\big\|u_{1,k_1}\big\|_{L_t^2 L_x^\infty}\cdot 2^{-\alpha k_2}\cdot \big\|Q_{j}v_{k_2}\big\|_{L_t^\infty L_x^2}\\
&\lesssim  2^{\frac{j}{2}}\cdot  2^{(\frac32 - 2\alpha)k}\cdot 2^{-\alpha k_1}\cdot 2^{(2\alpha - 1 + \frac{\alpha}{2+})k_1}\cdot 2^{-\alpha k_2}\cdot 2^{(2\alpha - \frac32)k_2}\cdot \epsilon_*^{\frac54}\\
&\lesssim 2^{\frac{j-k}{2}}\cdot 2^{(2 - 2\alpha)k}\cdot 2^{(2\alpha - \frac52 + \frac{\alpha}{2+})k_1}\cdot \epsilon_*^{\frac54},\\
 \end{align*}
 which is summable over $j<k, k<k_1 = k_2 + O(1)$ for $\alpha<\frac14$.  Further, we have 
 \begin{align*}
 &\big\|P_kQ_{<j}\big[|\nabla|^{-\alpha}u_{1,k_1} |\nabla|^{-\alpha}Q_{j}\Box v_{k_2}\big]\big\|_{L_t^1 \dot{H}^{s_{\alpha} - 1}}\\
 &\lesssim 2^{(\frac12 - 2\alpha)k}\cdot 2^{-\alpha k_1}\cdot \big\|u_{1,k_1}\big\|_{L_t^2 L_x^\infty}\cdot 2^{-\alpha k_2}\big\|Q_{j}\Box v_{k_2}\big\|_{L_{t,x}^2}\\
 &\lesssim  2^{(\frac12 - 2\alpha)k}\cdot 2^{-\alpha k_1}\cdot 2^{(2\alpha - 1 +  \frac{\alpha}{2+})k_1}\cdot 2^{-\alpha k_2}\cdot 2^{\frac{j}{2}}\cdot 2^{(2\alpha - \frac12)k_2}\cdot\epsilon_*^{\frac54}\\
 &\lesssim 2^{\frac{j-k}{2}}\cdot 2^{(1-2\alpha)k}\cdot 2^{(2\alpha - \frac32 + \frac{\alpha}{2+})k_1}\cdot\epsilon_*^{\frac54},\\
 \end{align*}
 which can be summed over $j<k, k<k_1 = k_2+O(1)>0$, proved $\alpha<\frac14$. 
 \\
 
 {\it{(3): self-interactions of $v$.}} Here we bound the term $R_{\nu}^{(\alpha)}v R^{\nu,(\alpha)}v$, which can be achieved by means of the now well-known null-frame type spaces of Tataru. Decompose as usual 
 \begin{align*}
 R_{\nu}^{(\alpha)}v R^{\nu,(\alpha)}v& = \sum_{|k_1 - k_2|<10}R_{\nu}^{(\alpha)}v_{k_1} R^{\nu,(\alpha)}v_{k_2}\\
 & + \sum_{k_1<k_2-10}R_{\nu}^{(\alpha)}v_{k_1} R^{\nu,(\alpha)}v_{k_2}\\
 & + \sum_{k_2<k_1-10}R_{\nu}^{(\alpha)}v_{k_1} R^{\nu,(\alpha)}v_{k_2}. 
 \end{align*}
 It suffices to deal with the first and second term on the right hand side. This being quite standard in light of \cite{Tat1}, \cite{Tao1}, \cite{KrSch} for example, we only deal with the first term here. 
 \\
 
 {\it{(3.a): high-high interactions}}. Write this as 
 \begin{align*}
 R_{\nu}^{(\alpha)}v_{k_1} R^{\nu,(\alpha)}v_{k_2}& = \sum_{k<k_1+10}P_k \big[R_{\nu}^{(\alpha)}Q_{\geq k+10}v_{k_1} R^{\nu,(\alpha)}v_{k_2}\big]\\
 & + \sum_{k<k_1+10}P_k \big[R_{\nu}^{(\alpha)}Q_{<k+10}v_{k_1} R^{\nu,(\alpha)}Q_{\geq k+10}v_{k_2}\big]\\
 & + \sum_{k<k_1+10}P_k Q_{\geq k+10} \big[R_{\nu}^{(\alpha)}Q_{<k}v_{k_1} R^{\nu,(\alpha)}Q_{<k}v_{k_2}\big]\\
 & + \sum_{k<k_1+10}\sum_{j<k+10}P_k Q_j\big[R_{\nu}^{(\alpha)}Q_{<j}v_{k_1} R^{\nu,(\alpha)}Q_{<j}v_{k_2}\big]\\
 & +  \sum_{k<k_1+10}\sum_{j<k+10}P_k Q_{<j}\big[R_{\nu}^{(\alpha)}Q_{j}v_{k_1} R^{\nu,(\alpha)}Q_{<j}v_{k_2}\big]\\
 & +  \sum_{k<k_1+10}\sum_{j<k+10}P_k Q_{<j}\big[R_{\nu}^{(\alpha)}Q_{<j}v_{k_1} R^{\nu,(\alpha)}Q_{j}v_{k_2}\big]\\
\end{align*}
 Here the first and second terms as well as the fifth and sixth terms are essentially the same, of course. We shall here exploit the full generality of the spaces $N_k$ to estimate these terms. 
 \begin{itemize}
 \item {\it{The first term on the right.}} Note that for this term either the second factor is at modulation $>2^{k+5}$ or else the entire expression is at modulation $>2^{k+5}$. Thus we reduce to estimating 
 \begin{align*}
 &\big\|P_k \big[R_{\nu}^{(\alpha)}Q_{\geq k+10}v_{k_1} R^{\nu,(\alpha)}Q_{\geq k+5}v_{k_2}\big]\big\|_{L_t^1 \dot{H}^{s_{\alpha} - 1}}\\&\lesssim 
 2^{(\frac12 - 2\alpha)k}\cdot 2^{\frac32 k}2^{(1-\alpha)k_1}\big(2^{-k_1}\big\|\nabla_{t,x}Q_{\geq k+10}v_{k_1}\big\|_{L_{t,x}^2}\big)\cdot 2^{(1-\alpha)k_2}\big(2^{-k_2}\big\|\nabla_{t,x}Q_{\geq k+5}v_{k_2}\big\|_{L_{t,x}^2}\big)\\
 &\lesssim 2^{(\frac12 - 2\alpha)k}\cdot 2^{\frac32 k}\cdot 2^{(1-\alpha)k_1}\cdot 2^{-\frac{k}{2}}\cdot 2^{(2\alpha - \frac32)k_1}\cdot 2^{(1-\alpha)k_2}\cdot 2^{-\frac{k}{2}}\cdot 2^{(2\alpha - \frac32)k_2}\cdot\epsilon_*^2\\
 &\lesssim 2^{(1-2\alpha)(k-k_1)}\cdot\epsilon_*^2.\\
 \end{align*}
 This can be summed over $k<k_1+10$ provided $\alpha<\frac14$. In case that the whole expression is at modulation $\geq 2^{k+5}$, we place it into $\dot{X}^{s_{\alpha} - 1,-\frac12,1}$. 
 \item {\it{The third term on the right}}. Here we use null-frame spaces. We have 
 \begin{align*}
&P_k Q_{\geq k+10} \big[R_{\nu}^{(\alpha)}Q_{<k}v_{k_1} R^{\nu,(\alpha)}Q_{<k}v_{k_2}\big]\\
& = \sum_{\pm}\sum_{\kappa_1\sim -\kappa_2\in K_{k-k_1}}P_k Q_{k_1+O(1)} \big[R_{\nu}^{(\alpha)}Q_{<k}^{\pm}P_{k_1,\kappa_1}v R^{\nu,(\alpha)}Q_{<k}^{\pm}P_{k_2,\kappa_2}v\big],\\
 \end{align*}
 and so we can bound this by 
 \begin{align*}
 &\big\|P_k Q_{\geq k+10} \big[R_{\nu}^{(\alpha)}Q_{<k}v_{k_1} R^{\nu,(\alpha)}Q_{<k}v_{k_2}\big]\big\|_{\dot{X}^{s_\alpha - 1,-\frac12,1}}\\
 &\lesssim 2^{-\frac{k}{2}}\cdot 2^{(\frac12 - 2\alpha)k}\cdot 2^{(1-\alpha)k_1}\cdot\big(\sum_{\kappa\in K_{k-k_1}}\big\|P_{k_1,\kappa}Q_{<k}^{\pm}v\big\|_{PW[\pm\kappa]}^2\big)^{\frac12}\\
 &\hspace{4cm}\cdot 2^{(1-\alpha)k_2}\big(\sum_{\kappa\in K_{k-k_1}}\big\|P_{k_2,\kappa}Q_{<k}^{\pm}v\big\|_{NFA^*[\mp\kappa]}^2\big)^{\frac12}\\
 &\lesssim 2^{-\frac{k}{2}}\cdot 2^{(\frac12 - 2\alpha)k}\cdot 2^{(1-\alpha)k_1}\cdot 2^{(2\alpha - \frac12)k_1}\cdot 2^{k-k_1}\cdot 2^{(1-\alpha)k_2}\cdot 2^{(2\alpha - \frac32)k_2}\cdot \epsilon_*^2\\
 &\lesssim 2^{(1-2\alpha)(k-k_1)}\cdot \epsilon_*^2,\\
 \end{align*}
 which can be summed over $k<k_1 = k_2+O(1)$, provided $\alpha<\frac14$. 
 \item {\it{The fourth to sixth terms.}} Write 
 \begin{align*}
 2P_k Q_j\big[R_{\nu}^{(\alpha)}Q_{<j}v_{k_1} R^{\nu,(\alpha)}Q_{<j}v_{k_2}\big]& = \Box P_k Q_j\big[|\nabla|^{-\alpha}Q_{<j}v_{k_1} |\nabla|^{-\alpha}Q_{<j}v_{k_2}\big]\\
 & - P_k Q_j\big[|\nabla|^{-\alpha}Q_{<j}\Box v_{k_1} |\nabla|^{-\alpha}Q_{<j}v_{k_2}\big]\\
 & - P_k Q_j\big[|\nabla|^{-\alpha}Q_{<j}v_{k_1} |\nabla|^{-\alpha}Q_{<j}\Box v_{k_2}\big]\\
 \end{align*}
 The last two terms on the right are of course symmetrical, and it suffices to bound one of them. The first term on the right can be estimated purely by means of Strichartz estimates
 \begin{align*}
&\big\|\Box P_k Q_j\big[|\nabla|^{-\alpha}Q_{<j}v_{k_1} |\nabla|^{-\alpha}Q_{<j}v_{k_2}\big]\big\|_{\dot{X}_k^{s_\alpha - 1,-\frac12, 1}}\\
&\lesssim 2^{(\frac32 - 2\alpha)k}\cdot 2^{\frac{j}{2}}\cdot 2^{-\alpha k_1}\big\|Q_{<j}v_{k_1}\big\|_{L_t^3L_x^6}\cdot 2^{-\alpha k_2}\big\|Q_{<j}v_{k_2}\big\|_{L_t^6 L_x^3}\\
&\lesssim 2^{\frac{j-k}{2}}\cdot 2^{(2 - 2\alpha)k}\cdot 2^{-\alpha k_1}\cdot 2^{(2\alpha - \frac32 + \frac23)k_1}\cdot 2^{-\alpha k_2}\cdot 2^{(2\alpha - \frac32 + \frac13)k_2}\cdot\epsilon_*^2\\
& = 2^{\frac{j-k}{2}}\cdot 2^{(2 - 2\alpha)(k-k_1)}\cdot\epsilon_*^2.\\
 \end{align*}
 This can be summed over $j<k, k<k_1 = k_2+O(1)$(where we assume as usual $\alpha<\frac14$). 
 \\
 For the remaining terms in the above decomposition, we have to again resort to null-frame spaces: write 
 \begin{align*}
 &P_k Q_j\big[|\nabla|^{-\alpha}Q_{<j}\Box v_{k_1} |\nabla|^{-\alpha}Q_{<j}v_{k_2}\big]\\
 & = \sum_{\pm,\pm}\sum_{\substack{\kappa_{1,2}\in K_{k-k_1 +\frac{j-k}{2}}\\ \text{dist}(\pm\kappa_1,\pm\kappa_2)\sim 2^{k-k_1 +\frac{j-k}{2}}}}P_k Q_j\big[|\nabla|^{-\alpha}Q_{<j}^{\pm}\Box P_{k_1,\kappa_1}v |\nabla|^{-\alpha}Q_{<j}^{\pm}P_{k_2,\kappa_2}v\big]\\
 & =  \sum_{\pm,\pm,\pm}\sum_{\substack{\kappa_{1,2}\in K_{k-k_1 +\frac{j-k}{2}}\\ \text{dist}(\pm\kappa_1,\pm\kappa_2)\sim 2^{k-k_1 +\frac{j-k}{2}}}}\sum_{\substack{\kappa\in C_{\frac{j-k}{2}}\\ \text{dist}(\pm\kappa,\pm\kappa_1)\sim 2^{\frac{j-k}{2}}}}P_{k,\kappa} Q_j^{\pm}\big[|\nabla|^{-\alpha}Q_{<j}^{\pm}\Box P_{k_1,\kappa_1}v \\&\hspace{9cm}\cdot|\nabla|^{-\alpha}Q_{<j}^{\pm}P_{k_2,\kappa_2}v\big].\\
 \end{align*}
 To simplify notation, denote the second sum counting from the left by $\Sigma^{(1)}$ and the third one $\Sigma^{(2)}$. 
 It follows that 
 \begin{align*}
 &\big\|P_k Q_j\big[|\nabla|^{-\alpha}Q_{<j}\Box v_{k_1} |\nabla|^{-\alpha}Q_{<j}v_{k_2}\big]\big\|_{N_k}\\
 &\lesssim 2^{(\frac12 - 2\alpha)k}\sum_{\pm,\pm,\pm}\Sigma^{(1)}\big(\Sigma^{(2)}\big\|P_{k,\kappa} Q_j^{\pm}\big[|\nabla|^{-\alpha}Q_{<j}^{\pm}\Box P_{k_1,\kappa_1}v \cdot|\nabla|^{-\alpha}Q_{<j}^{\pm}P_{k_2,\kappa_2}v\big]\big\|_{NFA[\pm\kappa]}^2\big)^{\frac12}\\
 &\lesssim  2^{(\frac12 - 2\alpha)k}\sum_{\pm,\pm}\Sigma^{(1)}2^{-\frac{j-k}{2}}\big\||\nabla|^{-\alpha}Q_{<j}^{\pm}\Box P_{k_1,\kappa_1}v\big\|_{L_{t,x}^2}\cdot \big\||\nabla|^{-\alpha}Q_{<j}^{\pm}P_{k_2,\kappa_2}v\big\|_{PW[\pm\kappa_2]}.
 \end{align*}
 Here we have exploited that for fixed $\kappa_{1,2}$ there are only $O(1)$ many choices for $\kappa$. Since for fixed $\kappa_1$ there are only $O(1)$ many choices for $\kappa_2$ (in $\Sigma^{(1)}$), we can apply the Cauchy-Schwarz inequality as well as Plancherel's theorem to bound the preceding by 
 \begin{align*}
& 2^{(\frac12 - 2\alpha)k}\cdot 2^{-\frac{j-k}{2}}\sum_{\pm,\pm}\big\||\nabla|^{-\alpha}Q_{<j}^{\pm}\Box v_{k_1}\big\|_{L_{t,x}^2}\cdot \big(\sum_{\kappa_2}\big\||\nabla|^{-\alpha}Q_{<j}^{\pm}P_{k_2,\kappa_2}v\big\|_{PW[\pm\kappa_2]}^2\big)^{\frac12}\\
 &\lesssim  2^{(\frac12 - 2\alpha)k}\cdot 2^{-\frac{j-k}{2}}\cdot 2^{(1-\alpha)k_1}\cdot \cdot 2^{\frac{j}{2}}\cdot 2^{(2\alpha - \frac32)k_1}\cdot 2^{-\alpha k_2}\cdot 2^{\frac{j-k}{2}}\cdot 2^{(2\alpha - \frac12)k_2}\cdot \epsilon_*^2\\
 &\lesssim 2^{\frac{j-k}{2}}\cdot 2^{(1-2\alpha)(k-k_1)}\cdot \epsilon_*^2. 
 \end{align*}
 This can be summed over $j<k, k<k_1 = k_2+O(1)$.

 \end{itemize}

  \end{proof}


\centerline{\scshape Nicolas Burq}
\medskip
{\footnotesize
 \centerline{D\'{e}partement de Math\'ematiques}
\centerline{Universit\'{e} Paris-Sud}
\centerline{Bat. 307, 91405 Orsay Cedex
FRANCE}
\centerline{\email{nicolas.burq@u-psud.fr}
} 

}

\medskip

\centerline{\scshape Joachim Krieger }
\medskip
{\footnotesize
 \centerline{B\^{a}timent des Math\'ematiques, EPFL}
\centerline{Station 8, 
CH-1015 Lausanne, 
  Switzerland}
  \centerline{\email{joachim.krieger@epfl.ch}}
}

\end{document}